\documentclass{amsart}

\usepackage{amsmath}
\usepackage{mathtools}
\usepackage{graphicx}
\usepackage{enumerate}
\usepackage{multirow}
\usepackage{color}
\usepackage{amsfonts}
\usepackage{amssymb}
\usepackage{mathrsfs}
\usepackage{amscd}
\usepackage{url}
\usepackage{tikz-cd}
\usepackage{booktabs}

\newcommand{\re}{\mathbb{R}}

\newcommand{\C}{\mathbb{C}}

\newcommand{\diag}{\mbox{diag}}

\newcommand{\vareps}{\varepsilon}
\newcommand{\dt}{\delta}

\newcommand{\tomg}{\widetilde{\Omega}}
\newcommand{\ta}{\tilde{a}}
\newcommand{\tb}{\tilde{b}}

\newcommand{\tv}{\tilde{v}}
\newcommand{\tu}{\tilde{u}}

\newcommand{\tw}{\tilde{w}}
\newcommand{\tx}{\tilde{x}}
\newcommand{\tk}{\tilde{k}}
\newcommand{\tsig}{\tilde{\sigma}}
\def\af{\alpha}
\def\bt{\beta}
\def\gm{\gamma}
\def\rank{\mbox{rank}}

\newcommand{\sig}{\sigma}
\newcommand{\Sig}{\Sigma}

\newcommand{\mt}[1]{\mathtt{#1}}

\newcommand{\reff}[1]{(\ref{#1})}

\newcommand{\mc}[1]{\mathcal{#1}}

\newcommand{\ddd}{,\ldots,}

\newcommand{\Span}{\mbox{span}}
\newcommand{\Flat}{\mbox{Flat}}

\newcommand{\bdes}{\begin{description}}
\newcommand{\edes}{\end{description}}

\newcommand{\bal}{\begin{align}}
\newcommand{\eal}{\end{align}}

\newcommand{\bnum}{\begin{enumerate}}
\newcommand{\enum}{\end{enumerate}}

\newcommand{\bit}{\begin{itemize}}
\newcommand{\eit}{\end{itemize}}

\newcommand{\bea}{\begin{eqnarray}}
\newcommand{\eea}{\end{eqnarray}}
\newcommand{\be}{\begin{equation}}
\newcommand{\ee}{\end{equation}}

\newcommand{\baray}{\begin{array}}
\newcommand{\earay}{\end{array}}

\newcommand{\bsry}{\begin{subarray}}
\newcommand{\esry}{\end{subarray}}

\newcommand{\bca}{\begin{cases}}
\newcommand{\eca}{\end{cases}}

\newcommand{\bcen}{\begin{center}}
\newcommand{\ecen}{\end{center}}

\newcommand{\bbm}{\begin{bmatrix}}
\newcommand{\ebm}{\end{bmatrix}}

\newcommand{\bmx}{\begin{matrix}}
\newcommand{\emx}{\end{matrix}}

\newcommand{\bpm}{\begin{pmatrix}}
\newcommand{\epm}{\end{pmatrix}}

\newcommand{\btab}{\begin{tabular}}
\newcommand{\etab}{\end{tabular}}

\newtheorem{theorem}{Theorem}[section]

\newtheorem{prop}[theorem]{Proposition}

\newtheorem{defi}[theorem]{Definition}
\theoremstyle{definition}
\newtheorem{example}[theorem]{Example}

\newtheorem{alg}[theorem]{Algorithm}

\newtheorem{remark}[theorem]{Remark}

\numberwithin{equation}{section}

\begin{document}

\title{Rank One Completion for Higher Order Tensors}

\author[Linghao~Zhang]{Linghao Zhang}
\author[Ioana~Dumitriu]{Ioana Dumitriu}
\author[Jiawang~Nie]{Jiawang Nie}

\address{Linghao Zhang, Ioana Dumitriu, Jiawang Nie,
Department of Mathematics, 
University of California San Diego,
9500 Gilman Drive, La Jolla, CA 92093, USA.}
\email{liz010@ucsd.edu, idumitriu@ucsd.edu, njw@math.ucsd.edu}

\date{}

\begin{abstract}
We study the rank one completion problem for tensors of arbitrary orders.
The notion of rank one determinable tensors is introduced.
We explore its properties and propose a recursive algorithm for computing rank one tensor completion. 
This algorithm only requires solving linear systems and computing singular vectors.
In the absence of noise, it produces a unique rank one completion under some assumptions.
In the presence of noise, we show that the computed rank one tensor completion is close to the exact one
when the noise is sufficiently small.
Numerical experiments demonstrate the efficiency and accuracy of the proposed method.
\end{abstract}

\keywords{tensor, completion, extractability, determinability, rank, flattening}

\subjclass[2020]{15A69, 90C23, 65F99}

\maketitle

\section{Introduction}
A tensor $\mc{A}$ of order $m$ (a positive integer) and dimension $(n_1 \ddd n_m)$ can be viewed as the multidimensional array such that
\[
\mc{A} \,=\, ( \mc{A}_{i_1 \cdots i_m} )_{1 \le i_1 \le n_1 \ddd 1 \le i_m \le n_m}.
\]
We denote by $\re^{n_1 \times \cdots \times n_m}$ the space of real tensors as above.
A {\it rank one} tensor is of the form
$u_1\otimes \cdots \otimes u_m $ for vectors $u_1 \in \re^{n_1}  \ddd u_m \in \re^{n_m}$ such that
\[
(u_1 \otimes \cdots \otimes u_m)_{i_1 \cdots i_m} \,=\, (u_1)_{i_1} \cdots (u_m)_{i_m}.
\]
The {\it rank} of $\mc{A}$ is the smallest number of rank one tensors whose sum equals $\mc{A}$.
This is referred to as the Candecomp-Parafac (CP) decomposition in the literature \cite{hitchcock1927,hitchcock1928,Lim13}. 
There are various notions of tensor ranks (e.g., border rank, multilinear rank),
and tensor ranks depend on the ground field.
Moreover, determining tensor ranks is NP-hard \cite{Hillar2013}.
We refer to \cite{LMV2000,harshman1970,hitchcock1927,hitchcock1928,Lim13,nieGP,nieSymApprox} for related work on tensor decompositions and tensor ranks.

The {\it tensor completion problem} (TCP) 
is to fill in missing entries of a partially observed tensor
such that the completed tensor has some properties (e.g., it has a low rank).
TCPs appear in a broad range of applications, 
including recommendation systems \cite{Frolov2015,Karatzoglou2010}, 
computer vision \cite{qin2022,qiu2021}, 
imaging and signal processing \cite{Lim2010,Lim2013,zhao2020};
see also \cite{KolBad09} for more applications.
In contrast to matrix completion problems, TCP is substantially more challenging.
There exists a variety of methods for low rank tensor completions,
including sum-of-squares relaxations \cite{barak2022},
tensor nuclear norm relaxations
\cite{mu2014,nieNuclear,tang2015,tian2024,yuan2016},
approaches based on CP and Tucker decompositions
\cite{ashraphijuo2017,bai2016,qiu2021,zhao2020},
Riemannian-manifold optimization
\cite{dong2022,kressner2014,steinlechner2016,swijsen2022},
and methods based on tubal rank and t-SVD
\cite{jiang2019,kilmer2011,wang19,zhang2017}.

This paper focuses on the special but fundamental case of rank one tensor completion.
Solving this problem sheds light on more general low rank completion problems,
as rank one tensors exhibit rich algebraic and geometric problems.
We consider an $m$th order partially observed tensor (p.o.t.)
$\mc{A}$ with the observation label set
\[
\Omega \subseteq [n_1] \times \cdots \times [n_m]
\quad \text{where} \quad [n_k] \coloneqq \{ 1\ddd n_k \},
\]
such that $\mc{A}_{i_1 \cdots i_m}$ is observed (given) for all $(i_1 \ddd i_m) \in \Omega$.
For convenience, we denote this p.o.t. by $(\mc{A}, \Omega)$.
The rank one tensor completion problem is to look for vectors
$u_1 \ddd u_m$ such that 
\be\label{tc_comp}
\mc{A}_{i_1 \cdots i_m} \,=\, (u_1)_{i_1} \cdots (u_m)_{i_m}
\quad \text{for all} \quad (i_1\ddd i_m) \in \Omega.
\ee
If such $u_1 \ddd u_m$ exist, $(\mc{A}, \Omega)$ is said to be {\it rank one completable},
and $u_1 \otimes \cdots \otimes u_m$ is called a {\it rank one completion} for $(\mc{A}, \Omega)$.
Moreover, $(\mc{A}, \Omega)$ is said to have a {\it unique} rank one completion if
for any two rank one completions 
$u_1 \otimes \cdots \otimes u_m$ and $v_1 \otimes \cdots \otimes v_m$
of $(\mc{A}, \Omega)$,
there exist nonzero scalars $\af_k$ such that 
$u_k = \af_k v_k$ for $k=1 \ddd m$.

The rank one tensor completion problem has been studied from several perspectives in the literature.
Kahle et al. \cite{kahle17} investigate its algebraic and geometric properties.
Some computational methods are proposed in \cite{cifuentes25,nie25,Singh20,zhou25}.
The work \cite{Singh20} studies uniqueness for rank one tensor completion,
and \cite{cifuentes25} gives a semidefinite programming (SDP) approach.
The work \cite{zhou25} is motivated by the following observation for cubic order tensors:
the rank one tensor $a \otimes b \otimes c$ is a completion for $(\mc{A}, \Omega)$ 
if and only if the vectors $a$ and $b$ satisfy
\be\label{det}
\det \bbm 
\mc{A}_{ijk} & a_ib_j \\
\mc{A}_{i'j'k} & a_{i'}b_{j'} \\
\ebm = 0
\ee
for all $(i,j,k), (i',j',k) \in \Omega$.
Equivalently, if we let $X = ab^T$, then 
\be\label{detX}
\mc{A}_{ijk} X_{i'j'} - \mc{A}_{i'j'k} X_{ij} = 0
\quad \text{for all} \quad (i,j,k), (i',j',k) \in \Omega.
\ee
This gives a system of linear equations in
$
X_{\bar{\Omega}} \coloneqq (X_{ij})_{(i,j) \in \bar{\Omega}},
$
where $\bar{\Omega}$ denotes the projection of $\Omega$ onto the first two indices.
When the solution space of \reff{detX} is one-dimensional, 
the partial matrix $X_{\bar{\Omega}}$ can be determined uniquely up to scaling.
If $(\mc{A}, \Omega)$ is rank one completable, 
then $X_{\bar{\Omega}}$ can be completed to a full matrix $X$ of rank one.
Then, we can obtain vectors $a, b$ (up to scaling) from $X$,
and thereby $c$ from $\mc{A} = a \otimes b \otimes c$ on $\Omega$.
For such a case, $(\mc{A}, \Omega)$ is said to be strongly rank one completable in \cite{zhou25}.
However, this approach is specific to cubic order tensors 
and does not extend naturally to the higher order case.
Moreover, it is not applicable in the presence of noise.
The follow-up work \cite{nie25} considers noisy rank one completion for cubic order tensors.
It formulates the problem as biquadratic optimization with sphere constraints and solves it by an SDP relaxation.
This approach can be effective in practice, 
but the biquadratic optimization may not always be solved globally by that relaxation.

\subsection*{Contributions}
In this paper, we propose an efficient approach for completing rank one tensors of arbitrary orders.
Let $(\mc{A}, \Omega)$ be a p.o.t. of order $m$.
Suppose it admits a rank one completion
$
\mc{A} = u_1 \otimes \cdots \otimes u_m.
$
The tensor $\mc{A}$ can be flattened into a matrix in various ways.
For convenience of description, we flatten $\mc{A}$ along the first $m-1$ indices,
so it becomes a rank one matrix
$
A = ab^T,
$
where $a$ is the vectorization of the tensor product $u_1 \otimes \cdots \otimes u_{m-1}$ and $b = u_m$.
In this way, the p.o.t. $(\mc{A}, \Omega)$ induces a partially observed matrix $(A, \Omega)$.
As shown in Section~\ref{sc:ext_mx},
the vector $a$ must satisfy a homogeneous linear system
\be\label{linsym:Bx=0}
Bx \,=\, 0,
\ee
where $B$ depends on $(\mc{A}, \Omega)$ but not on $b$.
Assume the solution space of \reff{linsym:Bx=0} is one-dimensional, 
then $a$ can be determined uniquely up to scaling. 
When the observed tensor entries are exact, 
we can recover $a$ from a nontrivial solution $x^*$ to \reff{linsym:Bx=0}. 
When these entries have noise, \reff{linsym:Bx=0} may not have a nontrivial solution,
but we can instead select $x^*$ as the right singular vector corresponding to the smallest singular value of $B$.
Once $x^*$ is computed, the vector $b$ can be recovered from the linear equation
$
A_{ij} = x^*_i b_j
$
on $\Omega$;
that is, $u_m$ is obtained.
Next, we reshape $x^*$ to a p.o.t. of order $m-1$, say, $(\mc{A}_1, \Omega_1)$.
We apply the above procedure to $(\mc{A}_1, \Omega_1)$,
and this yields the vector $u_{m-1}$.
Repeating this process recursively, we obtain vectors $u_{m-2} \ddd u_1$, 
which give a rank one completion.

Our new method has several advantages.
First, it applies to tensors of arbitrary orders.
Second, it remains robust in the presence of noise.
Third, our method only requires solving linear systems and computing singular vectors,
hence it is very efficient for large-scale problems.
Our main contributions are:

\bit
\item We introduce the notion of rank one {\it extractability} for partially observed matrices
and study its properties.

\item We extend the rank one matrix extractability to higher order tensors
by introducing the notion of rank one {\it determinability}.
The properties of rank one determinable tensors are studied.

\item Based on the above,
we propose a recursive algorithm (Algorithm~\ref{alg:recursive}) for completing rank one tensors of order $m$.
It produces the chain
\[
(\mc{A}_0, \Omega_0) 
\longrightarrow(\mc{A}_1, \Omega_1) 
\longrightarrow (\mc{A}_2, \Omega_2)
\longrightarrow
\cdots 
\longrightarrow (\mc{A}_{m-1}, \Omega_{m-1}),
\]
where each $\mc{A}_j$ is a p.o.t. of order $m-j$
(see Section~\ref{sc:TC_alg}).

\item Our algorithm only requires solving linear systems and computing singular vectors,
and we prove that it is robust under noise.
\eit

The paper is organized as follows.
Section~\ref{sc:ext_mx} introduces rank one extractable matrices
and studies the properties.
In Section~\ref{sc:det_tensor}, 
we define rank one determinability through tensor flattenings
and explore its connection with rank one completion.
Section~\ref{sc:TC_alg} proposes a recursive algorithm for completing rank one tensors of arbitrary orders.
Section~\ref{sc:perturb} gives perturbation analysis under noise.
Numerical experiments are presented in Section~\ref{sc:num_exp}.

\subsection*{Notation}
Denote by $\re$ (resp., $\C$) the real (resp., complex) field.
For a positive integer $m$, denote $[m] \coloneqq \{1\ddd m\}$.
For a set $S$, we denote its cardinality as $|S|$.
Let $e_i$ be the canonical basis vector such that the $i$th entry is $1$ and $0$ otherwise.
For a vector $x \in \re^n$, its Euclidean norm is
$\|x\| \coloneqq \sqrt{x^Tx}$.
For a matrix $A \in \re^{n_1 \times n_2}$, 
its spectral norm, denoted by $\|A\|$, is the largest singular value of $A$.
Denote by $\text{null}(A)$ the nullspace of $A$.
For a p.o.t. $(\mc{A}, \Omega)$, we define the norm
\[
\baray{c}
\|\mc{A}\|_{\Omega} 
\,\coloneqq\, \Big( \sum\limits_{(i_1 \ddd i_m) \in \Omega} |\mc{A}_{i_1 \cdots i_m}|^2 \Big)^{1/2}.
\earay
\]

\section{Rank one extractable matrices}\label{sc:ext_mx}

For a partially observed matrix (p.o.m.) $A \coloneqq (A_{ij}) \in \re^{n_1 \times n_2}$,
let $\Omega \subseteq [n_1] \times [n_2]$ be the set of indices $(i,j)$ 
such that the entry $A_{ij}$ is observed.
Denote
\be\label{S1S2}
S_1 = \{ i : (i,j) \in \Omega \}, 
\quad S_2 = \{ j : (i,j) \in \Omega \}.
\ee
We write $\Omega = \cup_{j \in S_2} \Theta_j$ where 
\be\label{theta_j}
\Theta_j = \{ (i,j) : (i,j) \in \Omega \} = \{ (i_1, j), (i_2, j) \ddd (i_{m_j}, j) \},
\ee
where $m_j = |\Theta_j|$ is the cardinality of $\Theta_j$.
If $(A, \Omega)$ is rank one completable, there exist $x \in \re^{n_1}, y \in \re^{n_2}$ 
such that $A_{ij} = x_iy_j$ for all $(i,j) \in \Omega$.
Then,
\[
\bbm A_{i_1 j} \\ 
\vdots \\ 
A_{i_{m_j} j} 
\ebm
= \bbm x_{i_1} \\ 
\vdots \\ 
x_{i_{m_j}} 
\ebm y_j
\quad \implies \quad
\rank 
\bbm 
A_{i_1 j} & x_{i_1} \\ 
\vdots & \vdots \\ 
A_{i_{m_j} j} & x_{i_{m_j}} 
\ebm \le 1.
\]
This implies the 2-by-2 minor
$
A_{i_s j} x_{i_t} - A_{i_t j} x_{i_s} = 0
$
for all $1 \le s < t \le m_j$.
This gives 
$\binom{m_j}{2}$
linear equations for each $j \in S_2$,
and there are
$
\sum_{j \in S_2} \binom{m_j}{2}
$
equations in total.
These equations are homogeneous in 
\[
x \, \coloneqq \, ( x_i )_{i \in S_1}.
\]
The set of solutions to these linear equations is a subspace of $\re^{S_1}$.
We are interested in the case that the solution space is one-dimensional.

\begin{defi}\label{def:rank1_det_mx}
A p.o.m. $(A, \Omega)$ is said to be rank one extractable if the solution space of 
the following system in $x \in \re^{S_1}$ is one-dimensional:
\be\label{eq:mx_det}
A_{i j} x_{i'} - A_{i' j} x_{i} = 0, \quad (i, j), (i', j) \in \Omega, \quad i < i', \quad j \in S_2.
\ee
\end{defi}

Observe that the system \reff{eq:mx_det} can be equivalently written as
\be\label{Bx=0_mx}
Bx \,=\, 0
\ee
for some sparse matrix $B$,
each row of which has at most two nonzero entries.

\begin{example}
Consider the p.o.m. $A \in \re^{3 \times 3}$ with observed entries:
\[
A_{11} = 2, \quad A_{12} = -4, \quad A_{13} = 6, \quad A_{21} = 3, \quad A_{22} = -6, \quad A_{31} = -1.
\]
We have $\Omega = \{(1,1), (1,2), (1,3), (2,1), (2,2), (3,1)\}$ and 
$S_1 = S_2 = \{1,2,3\}$.
The nullspace of $B$ as in \reff{Bx=0_mx} is one-dimensional,
so $(A, \Omega)$ is rank one extractable.
\end{example}

For a subset $\Omega \subseteq [n_1] \times [n_2]$,
let $G(V_1, V_2, \Omega)$ be the bipartite graph with vertex sets 
$V_1= \{ 1 \ddd n_1 \}$, $V_2 = \{ 1 \ddd n_2 \}$ and the edge set $\Omega$.
For convenience, we say a vector is {\it entirely nonzero} if all its entries are nonzero.

\begin{theorem}\label{thm:det_comp}
Let $(A, \Omega)$ be a p.o.m. such that $A_{ij} \ne 0$ for all $(i,j) \in \Omega$.
\bnum

\item[(i)] Suppose $(A, \Omega)$ is rank one completable. 
Then $(A, \Omega)$ has a unique rank one completion if and only if the graph $G(V_1, V_2, \Omega)$ is connected.

\item[(ii)] If \reff{eq:mx_det} has an entirely nonzero solution $\hat{x} \in \re^{S_1}$, then 
there exists an extension $a \in \re^{n_1}$ of $\hat{x}$ (i.e., $\hat{x}$ is a subvector of $a$) such that $A = ab^T$ for some $b \in \re^{n_2}$, where all entries of $a,b$ are nonzero.

\item[(iii)] The p.o.m. $(A, \Omega)$ is rank one completable if and only if \reff{eq:mx_det} has an entirely nonzero solution.

\item[(iv)] Suppose the graph $G(V_1, V_2, \Omega)$ is connected.
Then every nontrivial solution to \reff{eq:mx_det} is entirely nonzero.
If, in addition, $(A, \Omega)$ is rank one completable, then $(A, \Omega)$ is rank one extractable.

\enum
\end{theorem}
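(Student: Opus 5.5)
The plan is to work directly from the bipartite structure, using the fact that every observed entry is nonzero so that ratios along edges are well defined. Throughout, a rank one completion means vectors $x\in\re^{n_1}$, $y\in\re^{n_2}$ with $A_{ij}=x_iy_j$ for $(i,j)\in\Omega$. Since $A_{ij}\neq 0$, any such completion has $x_i\neq 0$ for $i\in S_1$ and $y_j\neq0$ for $j\in S_2$. I would prove the parts in the order (ii), (iii), (iv), (i), since (iii) rests on (ii) and the propagation argument for (iv) is reused in (i).

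For (ii), fix an entirely nonzero solution $\hat x$ of \reff{eq:mx_det}. For each $j\in S_2$, the equations $A_{ij}\hat x_{i'}=A_{i'j}\hat x_i$ for $(i,j),(i',j)\in\Theta_j$ say that the ratio $A_{ij}/\hat x_i$ does not depend on $i$ within $\Theta_j$. Call this common value $b_j$; it is nonzero. Set $a_i=\hat x_i$ for $i\in S_1$, and fill all remaining entries of $a$ and $b$ with $1$. Then $A_{ij}=a_ib_j$ on $\Omega$ and $a,b$ are entirely nonzero.

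For (iii), the direction ``$\Leftarrow$'' is exactly (ii). For ``$\Rightarrow$'', take a completion $(x,y)$. Its restriction $(x_i)_{i\in S_1}$ is entirely nonzero by the remark above, and it satisfies the $2\times2$ minor equations \reff{eq:mx_det} by the rank computation preceding Definition~\ref{def:rank1_det_mx}.

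For (iv), assume $G$ is connected, so every row index lies in $S_1$ and every column index in $S_2$ (ignoring the degenerate case $n_1+n_2=1$). If $i,i'$ share a column $j$, the equation gives $x_{i'}=(A_{i'j}/A_{ij})\,x_i$, with a nonzero multiplier. Any two row vertices are joined by a path alternating rows and columns, so iterating this relation expresses every $x_{i'}$ as a nonzero multiple of $x_{i_0}$ for a fixed $i_0$. Consequently, a solution with one zero entry is identically zero. Moreover, the map $x\mapsto x_{i_0}$ is injective on the solution space, so that space has dimension at most $1$. If $(A,\Omega)$ is completable, (iii) provides a nonzero solution, and hence the dimension is exactly $1$.

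For (i), first suppose $G$ is connected and $(x,y)$, $(u,v)$ are two completions. Along each edge, $x_iy_j=u_iv_j\neq0$. Define $\af=u_{i_0}/x_{i_0}$, and propagate along paths: each edge $(i,j)$ gives $v_j/y_j=x_i/u_i$, and the next edge $(i',j)$ then gives $u_{i'}/x_{i'}=y_j/v_j$. Hence $u=\af x$ and $v=\af^{-1}y$. Conversely, suppose $G$ is disconnected, and take a completion $(x,y)$, which we may choose entirely nonzero by (ii)/(iii). Pick a connected component $C$ that does not contain all vertices. If $C$ has an edge, multiply $x$ on the rows of $C$ by $t\neq\pm1$ and $y$ on the columns of $C$ by $1/t$. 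Products along edges are unchanged, since every edge lies inside a single component, but the new $x$ is not a scalar multiple of the old one. This holds because either some row lies outside $C$, or $C$ contains all rows and then some column lies outside $C$, in which case the same argument applies to $y$. If every component is a single vertex or the only nontrivial component is all of $G$ minus isolated vertices, instead change the entry of an isolated vertex to a different nonzero value; this also breaks proportionality. The main obstacle is exactly this bookkeeping in the disconnected direction, in particular ensuring that the modified completion is genuinely not of the form $(\af_1x,\af_2y)$. Picking the component so that vertices remain outside it on the relevant side handles it.
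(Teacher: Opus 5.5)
Your proposal is correct. Parts (ii) and (iii) are exactly the paper's argument, but (i) and the second half of (iv) are argued differently.

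The paper does not prove (i); it cites Cosse--Demanet and Kir\'aly--Theran--Tomioka. It then proves extractability in (iv) modularly, in three steps:
\begin{enumerate}
\item A second nontrivial solution $\tilde x$ is entirely nonzero, by zero-propagation through scalars $\alpha_j$ with $x_i=\alpha_j A_{ij}$.
\item Both $\hat x$ and $\tilde x$ are extended to full completions via (ii).
\item The uniqueness in (i) forces $\tilde x=\tau\hat x$.
\end{enumerate}

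You instead propagate the nonzero ratios $x_{i'}=(A_{i'j}/A_{ij})\,x_i$ along paths. This gives two facts in one step: nontrivial solutions are entirely nonzero, and $x\mapsto x_{i_0}$ is injective on the solution space. So the solution space has dimension at most one whenever $G$ is connected, with no completability hypothesis. Completability is used only to produce a nonzero solution via (iii). You then prove (i) directly, by the same edge-by-edge propagation plus an explicit rescaling of one component.

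Your route gives a self-contained proof in which (iv) depends neither on (i) nor on an external lemma. With the citation, one would have to check that the cited lemma's conventions on isolated vertices and zero entries match the paper's definition of uniqueness. The paper's route is shorter on the page.

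One bookkeeping remark on the disconnected direction of (i). Your sub-case ``the only nontrivial component is all of $G$ minus isolated vertices'' is already covered by the rescaling argument, since that component misses a vertex. The only case left is $\Omega=\emptyset$. There, if $n_1=n_2=1$, replacing an entry by another nonzero value does not break proportionality. Use instead that completions may have zero entries: for example, $(0,0)$ and $(1,1)$ are two completions that are not proportional.
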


\begin{proof}
(i) This conclusion can be found in \cite[Lemma~1]{cosse2021} and \cite{kiraly12}.

\noindent (ii) Let $\hat{x}$ be an entirely nonzero solution to \reff{eq:mx_det}.
For each $j \in S_2$, \reff{eq:mx_det} implies
\[
\rank 
\bbm
A_{i_1 j} & \hat{x}_{i_1} \\
\vdots & \vdots \\
A_{i_{m_j} j} & \hat{x}_{i_{m_j}} \\
\ebm = 1
\quad \implies \quad 
\bbm
A_{i_1 j} \\
\vdots \\
A_{i_{m_j} j} \\
\ebm
= \beta_j \bbm
\hat{x}_{i_1} \\
\vdots \\
\hat{x}_{i_{m_j}} \\
\ebm, \,
0 \ne \beta_j \in \re.
\]
Let $a \in \re^{n_1}$ and $b \in \re^{n_2}$ be such that
\[
\begin{gathered}
a_i = \hat{x}_i \quad \text{for }  i \in S_1, \qquad
b_j = \beta_j \quad \text{for } j \in S_2, \\
a_i = 1 \quad \text{for } i \in [n_1] \setminus S_1, \qquad
b_j = 1 \quad \text{for } j \in [n_2] \setminus S_2.
\end{gathered}
\]
Then $A_{ij} = \hat{x}_i \beta_j = a_ib_j$ on $\Omega$.
Hence, $A = ab^T$.

\noindent (iii) The ``if'' direction is shown in (ii).
Conversely,
if $(A, \Omega)$ is rank one completable, 
then there exist $a \in \re^{n_1}$ and $b \in \re^{n_2}$ such that $A_{ij} = a_ib_j$ on $\Omega$.
Let $\hat{x} \in \re^{S_1}$ be a subvector of $a$ satisfying $\hat{x}_i = a_i$ for all $i \in S_1$.
Then, for each $j \in S_2$, $(i,j), (i', j) \in \Omega$, $i < i'$, we have
\[
A_{i j} \hat{x}_{i'} - A_{i' j} \hat{x}_{i} = ( a_ia_{i'} - a_{i'}a_i ) b_j = 0.
\]
So, $\hat{x}$ solves \reff{eq:mx_det} and is entirely nonzero since $A_{ij} = a_ib_j \ne 0$ for all $(i,j) \in \Omega$.

\noindent (iv) Let $x$ be a nontrivial solution to \reff{eq:mx_det}.
For each $j \in S_2$, \reff{eq:mx_det} implies
\[
\rank 
\bbm
A_{i_1 j} & x_{i_1} \\
\vdots & \vdots \\
A_{i_{m_j} j} & x_{i_{m_j}} \\
\ebm = 1
\quad \implies \quad
\bbm
x_{i_1} \\
\vdots \\
x_{i_{m_j}} \\
\ebm =
\af_j \bbm
A_{i_1 j} \\
\vdots \\
A_{i_{m_j} j} \\
\ebm, \,
\af_j \in \re.
\]
This means that for the above $j$, $x_i = \af_j A_{ij}$ for all $i$ with $(i,j) \in \Omega$.
Suppose otherwise $x$ has a zero entry, say
$x_{i_1} = 0$ for some $i_1 \in S_1$.
Then, for every $j$ with $(i_1, j) \in \Omega$, we have
\be\label{af_j=0}
0 = x_{i_1} = \af_j A_{i_1 j} \implies \af_j = 0
\ee
since $A_{i_1 j} \ne 0$.
Since $G(V_1, V_2, \Omega)$ is connected, there exists a path
\[
i_1 \to j_1 \to i_2 \to j_2 \to i_3 \to j_3 \to \cdots \to j_{N-1} \to i_{N}
\]
for some $N \ge |S_1|$ such that $S_1 \subseteq \{ i_1 , i_2 \ddd i_N \}$ and 
\[
(i_k, j_k), (i_{k+1}, j_k) \in \Omega, \quad k = 1 \ddd N-1.
\]
We remark that $i_k, j_k$ may be repeated.
By \reff{af_j=0}, we have
$0 = x_{i_1} = \af_{j_1} A_{i_1 j_1}$,
which implies $\af_{j_1} = 0$.
Since $(i_2, j_1) \in \Omega$, we get $x_{i_2} = \af_{j_1} A_{i_2 j_1} = 0$.
Repeating this argument, we can similarly get
$x_{i_3} = \cdots = x_{i_N} = 0$.
Hence, $x = 0$, which contradicts that $x$ is a nontrivial solution to \reff{eq:mx_det}.
Therefore, every nontrivial solution to \reff{eq:mx_det} is entirely nonzero.

Moreover, if $(A, \Omega)$ is rank one completable, then by (iii),
\reff{eq:mx_det} has an entirely nonzero solution $\hat{x} \in \re^{S_1}$.
Suppose there exists another nontrivial solution $\tx \in \re^{S_1}$ to \reff{eq:mx_det}.
By the first part of (iv), $\tx$ is entirely nonzero.
So by (ii), there exists an extension of $\hat{x}$ (resp., $\tx$), 
say $\hat{a} \in \re^{n_1}$ (resp., $\ta \in \re^{n_1}$), 
such that $A_{ij} = \hat{a}_i\hat{b}_j = \ta_i\tb_j$ on $\Omega$, 
for some $\hat{b}, \tb \in \re^{n_2}$.
Since $G(V_1, V_2, \Omega)$ is connected, (i) implies that $(A, \Omega)$ has a unique rank one completion.
Then there exists a scalar $\tau \ne 0$ such that 
$\ta = \tau \hat{a}$ and $\tb = \tau^{-1} \hat{b}$.
Hence, $\tx = \tau \hat{x}$.
This means \reff{eq:mx_det} has a unique nontrivial solution up to scaling,
i.e., $(A, \Omega)$ is rank one extractable.
\end{proof}

We make the following remarks:
\bit

\item If $(A, \Omega)$ is rank one extractable, then $(A, \Omega)$ may not be rank one completable.
Consider the p.o.m. $A \in \re^{2 \times 2}$ with observed entries:
$A_{11} = 1$, $A_{21} = 0$, $A_{22} = 1$.
The solution space of \reff{eq:mx_det} is one-dimensional,
so $(A, \Omega)$ is rank one extractable,
but $(A, \Omega)$ is not rank one completable.

\item If $(A, \Omega)$ is rank one extractable, then $(A, \Omega)$ may not have a unique rank one completion
even if it is rank one completable.
Consider the p.o.m. $A \in \re^{2 \times 2}$ with observed entries:
$A_{11} = 1$, $A_{21} = 2$.
The solution space of \reff{eq:mx_det} is one-dimensional.
This means $(A, \Omega)$ is rank one extractable.
However, $(A, \Omega)$ has infinitely many rank one completions.
For instance, $A = ab^T$ with $a = (1,2)$ and $b = (1,t)$ for any $t \in \re$.

\item The rank one extractability does not imply connectedness of $G(V_1, V_2, \Omega)$ 
or vice versa.
As shown in the above item, $(A, \Omega)$ is rank one extractable, 
but $G(V_1,V_2,\Omega)$ is not connected (the vertex $2 \in V_2$ is isolated).
Conversely, consider the p.o.m. $A \in \re^{3 \times 3}$ with observed entries:
\[
\baray{cccccc}
A_{11} = 1, & A_{12} = 1, & A_{21} = 2, & A_{22} = 3, & A_{23} = 4, & A_{32} = 5.
\earay
\]
Then, $\Omega = \{ (1,1), (1,2), (2,1), (2,2), (2,3), (3,2) \}$
and $G(V_1,V_2,\Omega)$ is connected.
However, one can check that the solution space of \reff{eq:mx_det} is zero-dimensional, 
i.e., $(A, \Omega)$ is not rank one extractable.
\eit

The above shows that there is no direct implication among rank one completability,
rank one extractability, and the connectedness of $G(V_1, V_2, \Omega)$.
However, they are equivalent under some assumptions.

\begin{defi}
For $\Omega \subseteq [n_1] \times [n_2]$, we say that $\Omega$
is row-full if $S_1 = [n_1]$, and $\Omega$ is column-full if $S_2 = [n_2]$.
\end{defi}

We remark that 
$\Omega$ is row-full (resp., column full) if and only if there is no isolated vertex in $V_1$ (resp., $V_2$)
in the bipartite graph $G(V_1, V_2, \Omega)$.

\begin{theorem}\label{thm:connect<->rank-1}
Let $(A, \Omega)$ be a p.o.m. such that $A_{ij} \ne 0$ for all $(i,j) \in \Omega$.
\bnum

\item[(i)] Suppose the graph $G(V_1, V_2, \Omega)$ is connected. 
Then, $(A, \Omega)$ is rank one extractable if and only if $(A, \Omega)$ is rank one completable.

\item[(ii)] Suppose $\Omega$ is row-full and column-full, and
$(A, \Omega)$ is rank one completable. 
Then, the following are equivalent:
\bit
\item[(a)]  $(A, \Omega)$ is rank one extractable;

\item[(b)] $(A, \Omega)$ has a unique rank one completion;

\item[(c)] the graph $G(V_1, V_2, \Omega)$ is connected.
\eit

\enum
\end{theorem}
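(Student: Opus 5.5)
Both parts will follow from Theorem~\ref{thm:det_comp}; the only new ingredient is a short argument in part (ii).

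\emph{Part (i).} The direction ``completable $\Rightarrow$ extractable'' under connectedness is exactly the second statement of Theorem~\ref{thm:det_comp}(iv). For the converse, assume $(A,\Omega)$ is rank one extractable. Then \reff{eq:mx_det} has a nontrivial solution $x$. By the first statement of Theorem~\ref{thm:det_comp}(iv), connectedness forces $x$ to be entirely nonzero. Theorem~\ref{thm:det_comp}(iii) then shows that $(A,\Omega)$ is rank one completable.

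\emph{Part (ii).} Assume $\Omega$ is row-full and column-full and $(A,\Omega)$ is rank one completable. I will prove (c)$\Rightarrow$(a), (b)$\Leftrightarrow$(c) and (a)$\Rightarrow$(c).
\begin{itemize}
\item (c)$\Rightarrow$(a) is part (i).
\item (b)$\Leftrightarrow$(c) is Theorem~\ref{thm:det_comp}(i), which uses only completability.
\item (a)$\Rightarrow$(c) is the main step; I argue by contrapositive.
\end{itemize}

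Suppose $G(V_1,V_2,\Omega)$ is disconnected. Because $\Omega$ is row-full and column-full, no vertex is isolated. So every connected component has at least one edge, hence contains some row vertex and some column vertex. Take a component $C$ and let $R \subseteq [n_1] = S_1$ be its row vertices. Both $R$ and $[n_1]\setminus R$ are nonempty, the latter because another component also contains a row vertex.

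By Theorem~\ref{thm:det_comp}(iii) there is an entirely nonzero solution $\hat x$ of \reff{eq:mx_det}. For $t \neq 1$, define $\tilde x$ by $\tilde x_i = t\hat x_i$ for $i \in R$ and $\tilde x_i = \hat x_i$ otherwise. Each equation in \reff{eq:mx_det} involves $x_i, x_{i'}$ with $(i,j),(i',j)\in\Omega$, so $i$ and $i'$ lie in the same component. Hence each equation is either scaled by $t$ or left unchanged, and $\tilde x$ is again a solution. Since $R$ and its complement are both nonempty and $\hat x$ is entirely nonzero, $\tilde x$ is not a multiple of $\hat x$. The solution space therefore has dimension at least two, so $(A,\Omega)$ is not extractable.

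The point needing care is exactly where row-fullness enters: guaranteeing that two distinct components both contain row indices. Without it, the component-wise rescaling could be trivial, as in the earlier remark where an isolated column vertex occurs. Column-fullness is used symmetrically, to rule out components that are a single isolated row vertex. Once $S_1=[n_1]$ is used there, everything else is bookkeeping.
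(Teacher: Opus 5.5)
Your proof is correct. Part (i) is the paper's argument; citing Theorem~\ref{thm:det_comp}(iii) rather than (ii) makes no difference, since the ``if'' half of (iii) is (ii).

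In part (ii) you take a different route.
\begin{itemize}
\item The paper closes the cycle (a)$\Rightarrow$(b)$\Rightarrow$(c)$\Rightarrow$(a) and proves (a)$\Rightarrow$(b) directly. For any completion $ab^T$, row-fullness makes $a$ itself a solution of \reff{eq:mx_det} in $\re^{n_1}$, so extractability fixes $a$ up to scaling. Column-fullness then fixes each $b_j=A_{ij}/a_i$. The paper needs Theorem~\ref{thm:det_comp}(i) only in the direction ``unique $\Rightarrow$ connected''.
\item You instead prove (a)$\Rightarrow$(c) by contraposition, rescaling the solution on the rows of one component. You then need both directions of Theorem~\ref{thm:det_comp}(i) to bring (b) into the equivalence.
\end{itemize}
The paper's argument is shorter and avoids component bookkeeping. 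Yours is a self-contained linear-algebra explanation of why disconnection destroys extractability; it is the usual non-uniqueness construction for disconnected graphs, transported to the nullspace. Applying your rescaling to each component separately also gives a sharper statement: for a completable p.o.m., the solution space of \reff{eq:mx_det} has dimension at least the number of connected components that contain an edge.

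Your closing commentary swaps the roles of the two hypotheses, although the proof body uses them correctly.
\begin{itemize}
\item Column-fullness is what rules out a component consisting of a lone column vertex. That is the situation in the earlier remark with $A_{11}=1$, $A_{21}=2$, which is row-full but not column-full.
\item Row-fullness is what rules out a component consisting of a lone row vertex. Such a vertex is not an index of the variable $x\in\re^{S_1}$, so rescaling it does nothing.
\end{itemize}
You need both hypotheses so that two different components actually carry variables of \reff{eq:mx_det}.
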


\begin{proof}
(i) Suppose $G(V_1, V_2, \Omega)$ is connected.
If $(A, \Omega)$ is rank one extractable, then \reff{eq:mx_det} has a unique nontrivial solution 
(up to scaling),
say, $\hat{x}$.
Then Theorem~\ref{thm:det_comp}(iv) implies that $\hat{x}$ is entirely nonzero.
By Theorem~\ref{thm:det_comp}(ii), $(A, \Omega)$ is rank one completable.
Conversely, if $(A, \Omega)$ is rank one completable, then $(A, \Omega)$ is rank one extractable by Theorem~\ref{thm:det_comp}(iv).

\noindent (ii) $(a) \Rightarrow (b)$: 
Let $A = ab^T$ be a rank one completion.
Since $(A, \Omega)$ is rank one extractable and $\Omega$ is row-full,
$a$ is the unique (up to scaling) solution to \reff{eq:mx_det}.
Then, $b$ is also unique since $\Omega$ is column-full.
So, $(A, \Omega)$ has a unique rank one completion.

\noindent $(b) \Rightarrow (c)$: 
Since $(A, \Omega)$ has a unique rank one completion, 
$G(V_1, V_2, \Omega)$ is connected by Theorem~\ref{thm:det_comp}(i). 

\noindent $(c) \Rightarrow (a)$: This is implied by item~(i).
\end{proof}

We remark that in Definition~\ref{def:rank1_det_mx}, 
the rank one extractability of $(A, \Omega)$ is defined by enumeration over the index $j$.
Similarly, one can also consider enumeration over the index $i$.
This is related to the rank one extractability of $(A^T, \Omega')$ for 
\be\label{omg'}
\Omega' = \{ (j,i) : (i,j) \in \Omega \}, 
\quad S_1' = S_2, \quad S_2' = S_1.
\ee
However, the rank one extractability of $(A, \Omega)$ does not imply 
the rank one extractability of $(A^T, \Omega')$.
This is shown by the following example.

\begin{example}
Consider the p.o.m. $A \in \re^{3 \times 3}$ with observed entries:
$A_{11} =  A_{12} = 1$, $A_{21} = A_{23} = A_{31} = 0$.
Then $\Omega = \{(1,1), (1,2), (2,1), (2,3), (3,1)\}$.
The solution space of \reff{eq:mx_det} is one-dimensional, so 
$(A, \Omega)$ is rank one extractable.
However, the solution space of \reff{eq:mx_det} for $(A^T, \Omega')$ 
is two-dimensional, so $(A^T, \Omega')$ is not rank one extractable.
\end{example}

\begin{prop}\label{prop:extra_transpose}
Let $(A, \Omega)$ be a p.o.m. such that $A_{ij} \ne 0$ for all $(i,j) \in \Omega$. 
\bnum

\item[(i)] Suppose $(A, \Omega)$ is rank one completable.
Then, $(A , \Omega)$ is rank one extractable if and only if $(A^T, \Omega')$ is rank one extractable.

\item[(ii)] Suppose the graph $G(V_1, V_2, \Omega)$ is connected.
Then, $(A , \Omega)$ is rank one extractable if and only if $(A^T, \Omega')$ is rank one extractable.
\enum
\end{prop}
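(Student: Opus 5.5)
Part (ii) follows from the symmetry of the bipartite graph under transposition together with Theorem~\ref{thm:connect<->rank-1}(i). For part (i), the plan is to compute the dimension of the solution space of \reff{eq:mx_det} explicitly in terms of $G(V_1,V_2,\Omega)$ and observe that this count does not change under transposition.

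For (i), suppose $A_{ij}=a_ib_j$ on $\Omega$. Since every observed entry is nonzero, $a_i\neq 0$ for all $i\in S_1$ and $b_j\neq 0$ for all $j\in S_2$. Substituting into \reff{eq:mx_det} and dividing by $b_j\neq0$ and by $a_ia_{i'}\neq 0$, each equation becomes
\[
\frac{x_{i'}}{a_{i'}} \,=\, \frac{x_i}{a_i}\qquad \text{whenever } (i,j),(i',j)\in\Omega .
\]
Set $z_i := x_i/a_i$ for $i\in S_1$. Then $x$ solves \reff{eq:mx_det} exactly when $z$ is constant on each class of the equivalence relation on $S_1$ generated by ``sharing a column''. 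These classes are precisely the sets $C\cap S_1$, where $C$ runs over the connected components of $G(V_1,V_2,\Omega)$ that contain at least one edge; isolated vertices do not meet $S_1$. So the solution space has dimension equal to the number $N(\Omega)$ of connected components of $G(V_1,V_2,\Omega)$ that contain an edge. The map $x\mapsto z$ is an invertible diagonal rescaling, so no dimension is lost in this reduction.

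Next apply the same computation to $(A^T,\Omega')$. It is rank one completable, since $A^T_{ji}=b_ja_i$ on $\Omega'$, and its entries are nonzero. Its bipartite graph is the same graph as $G(V_1,V_2,\Omega)$ with the two vertex classes swapped, so its solution-space dimension is the same number $N(\Omega)$. Therefore $(A,\Omega)$ is rank one extractable $\iff N(\Omega)=1 \iff (A^T,\Omega')$ is rank one extractable. The one step needing care is checking that the equivalence classes on $S_1$ correspond exactly to the edge-containing components, including that every $j\in S_2$ lies in such a component.

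For (ii), note that $G(V_2,V_1,\Omega')$ is the same graph as $G(V_1,V_2,\Omega)$, so it is connected. Clearly $(A,\Omega)$ is rank one completable if and only if $(A^T,\Omega')$ is, since $A=ab^T$ on $\Omega$ if and only if $A^T=ba^T$ on $\Omega'$. By Theorem~\ref{thm:connect<->rank-1}(i) applied to both partially observed matrices, $(A,\Omega)$ is extractable $\iff$ $(A,\Omega)$ is completable $\iff$ $(A^T,\Omega')$ is completable $\iff$ $(A^T,\Omega')$ is extractable.
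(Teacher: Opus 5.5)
Your proof is correct. Part (ii) matches the paper's argument: connectivity and completability are both preserved under transposition, and Theorem~\ref{thm:connect<->rank-1}(i) is applied to each p.o.m.

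For part (i) you take a genuinely different route. The paper first reduces, without loss of generality, to $\Omega$ being row-full and column-full. It then chains Theorem~\ref{thm:connect<->rank-1}(ii) (extractable, hence connected) with Theorem~\ref{thm:connect<->rank-1}(i) applied to $(A^T,\Omega')$ (connected and completable, hence extractable). You instead compute the solution space of \reff{eq:mx_det} directly, via the diagonal rescaling $z_i = x_i/a_i$. This shows its dimension equals the number of connected components of $G(V_1,V_2,\Omega)$ that contain an edge, a count that is visibly unchanged when the two vertex classes are swapped.

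Your route is more elementary and self-contained. It never passes through uniqueness of completion, so it does not rely on the cited result behind Theorem~\ref{thm:det_comp}(i), on which the paper's chain ultimately rests. It also handles isolated vertices automatically, with no reduction to $S_1\times S_2$. It proves the stronger statement that the two solution spaces have equal dimension, not merely that both are one-dimensional simultaneously. In passing it re-derives (a)$\Leftrightarrow$(c) of Theorem~\ref{thm:connect<->rank-1}(ii). The paper's route is shorter only because it reuses the Section~2 machinery already in place.
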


\begin{proof}
(i) Without loss of generality, we can assume $\Omega$ is row-full and column-full.
For the ``only if'' direction, we know the graph $G(V_1, V_2, \Omega)$ is connected by Theorem~\ref{thm:connect<->rank-1}(ii).
This implies $G(V_2, V_1, \Omega')$ is also connected.
Since $(A, \Omega)$ is rank one completable, $(A^T, \Omega')$ is also rank one completable.
By Theorem~\ref{thm:connect<->rank-1}(i), $(A^T, \Omega')$ is rank one extractable.
The proof for the ``if'' direction is the same.

\noindent (ii) Since $G(V_1, V_2, \Omega)$ is connected,
by Theorem~\ref{thm:connect<->rank-1}(i),
we know
$(A, \Omega)$ is rank one extractable if and only if $(A, \Omega)$ is rank one completable.
Note that $G(V_2, V_1, \Omega')$ is also connected,
and $(A, \Omega)$ being rank one completable is equivalent to $(A^T, \Omega')$ being rank one completable.
So, the conclusion follows from item~(i).
\end{proof}

\section{Rank one determinable tensors}\label{sc:det_tensor}

The notion of rank one extractable matrices in Section~\ref{sc:ext_mx} can be extended to tensors.
For an integer $k \in [m]$, denote 
\begin{eqnarray}
\label{vec_i}
\vec{i}  & \coloneqq & \big( i_1 \ddd i_{k-1}, i_{k+1} \ddd i_m \big),   \\
\label{I_k}
I_k  & \coloneqq & \big\{ \vec{i} : 1 \le i_l \le n_l \text{ for }l \ne k \big\}.
\end{eqnarray}
Denote by $\re^{ I_k \times [n_k] }$ the space of matrices with rows indexed by $\vec{i} \in I_k$
and columns indexed by $j \in [n_k]$.
For $\mc{A} \in \re^{n_1 \times \cdots \times n_m}$, 
let $\Flat_k(\mc{A}) \in \re^{ I_k \times [n_k] }$ be such that
\be\label{flat_k}
\big( \Flat_k(\mc{A}) \big)_{\vec{i} j} \,\coloneqq\, \mc{A}_{i_1 \cdots i_{k-1} j i_{k+1} \cdots i_m}
\ee
with $\vec{i} \in I_k$ and $j \in [n_k]$.
The matrix $\Flat_k(\mc{A})$ is called the $k$th {\it flattening matrix} of $\mc{A}$.
We refer to \cite{KolBad09} for tensor flattening (or matricization).

For $\Omega \subseteq [n_1] \times \cdots \times [n_m]$,
define the subset of $I_k \times [n_k]$:
\be\label{Omg^k}
\Omega^{(k)} \,\coloneqq\, \{ ( \vec{i}, j ) : 
( i_1 \ddd i_{k-1}, j, i_{k+1} \ddd i_m ) \in \Omega \}.
\ee
When $\mc{A}$ is partially observed on $\Omega$, 
the matrix $\Flat_k(\mc{A})$ is partially observed on $\Omega^{(k)}$.
Denote the index sets
\be\label{S1kS2k}
S^{(k)}_1 = \big\{ \vec{i} : (\vec{i},j) \in \Omega^{(k)} \big\}, 
\quad S^{(k)}_2 = \big\{ j : (\vec{i},j) \in \Omega^{(k)} \big\}.
\ee
Denote the vector variable
\[
x \,\coloneqq\, ( x_{\vec{i}} )_{\vec{i} \in S_1^{(k)}}.
\]
Like \reff{eq:mx_det},
we consider the linear system
\be\label{eq:flat_k_det}
\big(\Flat_k(\mc A)\big)_{\vec{i} j} \, x_{\vec{i'} } - \big(\Flat_k(\mc A)\big)_{\vec{i'} j} \, x_{\vec{i}} \,=\, 0
\ee
for all 
$(\vec{i}, j), (\vec{i'}, j) \in \Omega^{(k)}$, $\vec{i} <_{lex} \vec{i'}$, $j \in S^{(k)}_2$.
Here, $<_{lex}$ denotes the standard lexicographical ordering.
Equivalently, \reff{eq:flat_k_det} reads as
\[
\mc A_{i_{1}\cdots i_{k-1} j i_{k+1}\cdots i_{m}} \, x_{\vec{i'}}
-
\mc A_{i'_{1}\cdots i'_{k-1} j i'_{k+1}\cdots i'_{m}} \, x_{\vec{i}} \,=\, 0,
\]
where $\vec{i'} = ( i'_1 \ddd i'_{k-1}, i'_{k+1} \ddd i'_m )$.
The system \reff{eq:flat_k_det} is equivalent to
\be\label{Bx=0}
B^{(k)}  x \,=\, 0
\ee
for some matrix $B^{(k)}$.
Denote the label set
\be\label{P_k}
\mc{P}_k \,\coloneqq\, [n_1] \times \cdots \times [n_{k-1}] \times [n_{k+1}] \times \cdots \times [n_m].
\ee
For $x \in \re^{S_1^{(k)}}$, 
let $\mc{T}_k(x)$ be the p.o.t. in 
$\re^{\mc{P}_k}$
such that
\be\label{up_k}
\mc{T}_k(x)_{i_1 \cdots i_{k-1} i_{k+1} \cdots i_m} \,=\, x_{\vec{i}}
\ee
for all $\vec{i} \in S^{(k)}_1$.
Define the reshaping map 
\be\label{gm_k}
\Gamma_k : \re^{I_k} \to \re^{\mc{P}_k},
\qquad u \mapsto \Gamma_k (u)
\ee
such that for all $\vec{i} \in I_k$ as in \reff{vec_i},
\[
\Gamma_k (u)_{i_1 \cdots i_{k-1} i_{k+1} \cdots i_m}
\,=\, u_{\vec{i}}.
\]
The above $\Gamma_k (u)$ is a full tensor of order $m-1$.

\begin{defi}\label{def:rank1-det}
The $m$th order p.o.t. $(\mc{A}, \Omega)$ is rank one determinable if:
\bnum

\item[(i)] When $m=2$, $(\mc{A}, \Omega)$ or $(\mc{A}^T, \Omega')$ is rank one extractable, 
where $\Omega'$ is defined as in \reff{omg'}.

\item[(ii)] When $m \ge 3$, there exists $k \in [m]$ such that
$\big( \Flat_k(\mc{A}), \Omega^{(k)} \big)$ is rank one extractable,
and the unique (up to scaling) nontrivial solution $x^*$ to \reff{eq:flat_k_det} satisfies that
$\big( \mc{T}_k(x^*), S_1^{(k)} \big)$ is rank one determinable.
\enum
\end{defi}

In the above, $\mc{T}_k(x^*)$ is a p.o.t. of order $(m-1)$ partially observed on $S_1^{(k)}$.
Now we give some examples of rank one determinable tensors.

\begin{example}\cite{zhou25}
Consider the p.o.t. $\mc{A} \in \re^{3 \times 3 \times 3}$ with observed entries:
\[
\baray{llll}
\mc{A}_{111} = -1, &
\mc{A}_{221} = -1, & 
\mc{A}_{311} = -1, & 
\mc{A}_{132} = 1,  \\
\mc{A}_{312} = -1, & 
\mc{A}_{233} = 1, & 
\mc{A}_{313} = 1, & 
\mc{A}_{323} = -1.
\earay
\]
For $k = 3$, we have $S_1^{(3)} = \{ (1,1), (1,3), (2,2), (2,3), (3,1), (3,2) \}$ and
\[
B^{(3)} =
\left[
\begin{array}{rrrrrr}
1 & 0 & -1  & 0 & 0 & 0\\
1  & 0 & 0 & 0 & -1 & 0\\
0 & 0 & 1  & 0 & -1  & 0\\
0 & 1 & 0 & 0 & 1 & 0\\
0 & 0 & 0 & -1  & 1 & 0\\
0 & 0 & 0 & 1  & 0 & 1 \\
0 & 0 & 0 & 0 & 1 & 1 
\end{array}
\right],
\]
where $B^{(3)}$ is defined as in \reff{Bx=0}.
The solution space of \reff{eq:flat_k_det} is spanned by
$
x^* = (-1, 1, -1, -1, -1, 1).
$
This means $(\Flat_3(\mc{A}), \Omega^{(3)})$ is rank one extractable. 
Note that $A \coloneqq \mc{T}_3(x^*) \in \re^{3 \times 3}$ is the p.o.m. with observed entries:
\[
A_{11} = -1, \ A_{13} = 1, \ A_{22} = -1, \
A_{23} = -1, \ A_{31} = -1, \ A_{32} = 1.
\]
One can check that the solution space of \reff{eq:mx_det} is spanned by $(1, -1, 1)$.
So, $(A, S_1^{(3)})$ is rank one extractable.
Therefore, $(\mc{A}, \Omega)$ is rank one determinable.
It is worth noting that the solution space of \reff{eq:flat_k_det} is two-dimensional for $k=1,2$.
\end{example}

\begin{example}
Consider the p.o.t. $\mc{A} \in \re^{2 \times 2 \times 3 \times 5  \times 8 \times 9}$ with observed entries:
\[
\baray{l}
\mc{A}_{1     2     3     1     3     4}
= \mc{A}_{1     2     3     1     3     6}
= \mc{A}_{1     2     3     1     4     7}
= \mc{A}_{1     2     3     1     4     8}
= \mc{A}_{1     2     3     3     6     3}
= \mc{A}_{1     2     3     3     6     6}
= \mc{A}_{2     1     1     1     4     7} = 1,  \\
\mc{A}_{2     1     1     1     4     9}
= \mc{A}_{2     1     1     1     8     1}
= \mc{A}_{2     1     1     1     8     5}
= \mc{A}_{2     1     1     2     2     4}
= \mc{A}_{2     1     1     2     2     9}
= \mc{A}_{2     1     1     2     5     5}
= \mc{A}_{2     1     1     2     5     6} = 1,\\
\mc{A}_{2     1     2     4     1     4}
= \mc{A}_{2     1     2     4     1     8}
= \mc{A}_{2     1     2     4     6     3}
= \mc{A}_{2     1     2     4     6     8}
= \mc{A}_{2     1     2     5     5     3}
= \mc{A}_{2     1     2     5     5     5}
= \mc{A}_{2     1     2     5     8     1} = 1,\\ 
\mc{A}_{2     1     2     5     8     4}
= \mc{A}_{2     2     1     4     4     2}
= \mc{A}_{2     2     1     4     4     7}
= \mc{A}_{2     2     1     4     7     5}
= \mc{A}_{2     2     1     4     7     8}
= \mc{A}_{2     2     3     3     1     6}
= \mc{A}_{2     2     3     3     1     9} = 1,\\
\mc{A}_{2     2     3     3     7     7}
= \mc{A}_{2     2     3     5     2     4}
= \mc{A}_{2     2     3     5     2     9}
= \mc{A}_{2     2     3     5     3     2}
= \mc{A}_{2     2     3     5     3     3} = 1.
\earay
\]
For $k = 6$, \reff{eq:flat_k_det} has a unique (up to scaling) nontrivial solution $x^{(0)}$.
This gives the p.o.t. $\mc{B} \coloneqq \mc{T}_6(x^{(0)})$ with observed entries:
\[
\baray{l}
\mc{B}_{1     2     3     1     3}
= \mc{B}_{1     2     3     1     4}
= \mc{B}_{1     2     3     3     6}
= \mc{B}_{2     1     1     1     4}
= \mc{B}_{2     1     1     1     8}
= \mc{B}_{2     1     1     2     2} = 1, \\
\mc{B}_{2     1     1     2     5} 
= \mc{B}_{2     1     2     4     1} 
= \mc{B}_{2     1     2     4     6} 
= \mc{B}_{2     1     2     5     5}
= \mc{B}_{2     1     2     5     8}
= \mc{B}_{2     2     1     4     4} = 1, \\
\mc{B}_{2     2     1     4     7}
= \mc{B}_{2     2     3     3     1}
= \mc{B}_{2     2     3     3     7}
= \mc{B}_{2     2     3     5     2}
= \mc{B}_{2     2     3     5     3} = 1.
\earay
\]
For $k = 5$, \reff{eq:flat_k_det} has a unique (up to scaling) nontrivial solution $x^{(1)}$.
This gives the p.o.t. $\mc{C} \coloneqq \mc{T}_5(x^{(1)})$ with observed entries:
\[
\baray{l}
\mc{C}_{1     2     3     1}
= \mc{C}_{1     2     3     3}
= \mc{C}_{2     1     1     1}
= \mc{C}_{2     1     1     2}
= \mc{C}_{2     1     2     4}
= \mc{C}_{2     1     2     5}
= \mc{C}_{2     2     1     4}
= \mc{C}_{2     2     3     3}
= \mc{C}_{2     2     3     5} =1.
\earay
\]
For $k = 4$, \reff{eq:flat_k_det} has a unique (up to scaling) nontrivial solution $x^{(2)}$.
This gives the p.o.t. $\mc{D} \coloneqq \mc{T}_4(x^{(2)})$ with observed entries:
\[
\baray{l}
\mc{D}_{1     2     3}
= \mc{D}_{2     1     1}
= \mc{D}_{2     1     2}
= \mc{D}_{2     2     1}
= \mc{D}_{2     2     3} = 1.
\earay
\]
For $k = 3$, \reff{eq:flat_k_det} has a unique (up to scaling) nontrivial solution $x^{(3)}$.
This gives the p.o.m. $\mc{E} \coloneqq \mc{T}_3(x^{(3)})$ with observed entries:
\[
\baray{l}
\mc{E}_{1     2}
= \mc{E}_{2     1}
= \mc{E}_{2     2} = 1.
\earay
\]
The solution space of \reff{eq:mx_det} for $\mc{E}$ is one-dimensional. 
So, $\mc{E}$ is rank one extractable. Therefore, $\mc{A}$ is rank one determinable.
\end{example}

\begin{defi}
For $t \in [m]$, we say that $\Omega$ is mod-$t$ full if 
\[
\{ i_t : (i_1 \ddd i_m) \in \Omega \} \,=\, \{1 \ddd n_t\}.
\]
\end{defi}

\begin{theorem}\label{thm:det-->unique_comp}
Assume $\Omega$ is mod-$t$ full for every $t \in [m]$ and 
$\mc{A}_{i_1 \cdots i_m} \ne 0$ for all $(i_1 \ddd i_m) \in \Omega$.
If $(\mc{A}, \Omega)$ is rank one determinable,
then $(\mc{A}, \Omega)$ has a unique rank one completion when it is rank one completable.
\end{theorem}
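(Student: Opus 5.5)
Induction on the order $m$; for the matrix case we use Theorem~\ref{thm:connect<->rank-1}.

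\emph{Base case $m=2$.} Mod-$1$ and mod-$2$ fullness say exactly that $\Omega$ is row-full and column-full. All observed entries are nonzero, and $(\mc{A},\Omega)$ is rank one completable. If $(\mc{A},\Omega)$ is rank one extractable, Theorem~\ref{thm:connect<->rank-1}(ii), (a)$\Rightarrow$(b), gives a unique rank one completion. If instead $(\mc{A}^T,\Omega')$ is extractable, $\Omega'$ is still row- and column-full and $(\mc{A}^T,\Omega')$ is completable. The same theorem gives uniqueness for $\mc{A}^T$, which transfers to $\mc{A}$ by transposing.

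\emph{Inductive step, $m\ge 3$.} Let $k$ be the index from Definition~\ref{def:rank1-det}(ii), and suppose $u_1\otimes\cdots\otimes u_m$ and $v_1\otimes\cdots\otimes v_m$ are two completions.
\begin{enumerate}
\item[(1)] Flattening gives $\Flat_k(\mc{A})=ab^T$ on $\Omega^{(k)}$, with $a_{\vec i}=\prod_{l\ne k}(u_l)_{i_l}$ and $b=u_k$; similarly $\Flat_k(\mc{A})=\tilde a\tilde b^T$ from the $v$'s. Because observed entries are nonzero, $a_{\vec i}\neq 0$ and $b_j\ne 0$ on the relevant index sets.
\item[(2)] The computation in Theorem~\ref{thm:det_comp}(iii) shows that $a|_{S_1^{(k)}}$ and $\tilde a|_{S_1^{(k)}}$ both solve \reff{eq:flat_k_det}, and both are nontrivial. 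By extractability each is a nonzero multiple of $x^*$.
\item[(3)] Hence $\mc{T}_k(x^*)$ is rank one completable, with completions $\lambda\,\bigotimes_{l\ne k}u_l$ and $\mu\,\bigotimes_{l\ne k}v_l$ for nonzero scalars $\lambda,\mu$. Its entries on $S_1^{(k)}$ are nonzero.
\item[(4)] $S_1^{(k)}$ is mod-$t$ full for each $t\ne k$, since every value $i_t$ appears in some element of $\Omega$ and hence in its projection.
\item[(5)] The induction hypothesis, applied to the rank one determinable tensor $(\mc{T}_k(x^*),S_1^{(k)})$, gives nonzero $\alpha_l$ with $u_l=\alpha_l v_l$ for all $l\ne k$.
\item[(6)] For each $j\in[n_k]$, mod-$k$ fullness gives some $(\vec i,j)\in\Omega^{(k)}$. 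Then $a_{\vec i}(u_k)_j=\tilde a_{\vec i}(v_k)_j$. Since $a_{\vec i}=\prod_{l\ne k}\alpha_l\cdot\tilde a_{\vec i}$ and $\tilde a_{\vec i}\ne0$, we get $(u_k)_j=\bigl(\prod_{l\ne k}\alpha_l\bigr)^{-1}(v_k)_j$. So $u_k$ is a fixed multiple of $v_k$, which completes uniqueness.
\end{enumerate}

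The main obstacle is making the induction hypothesis apply in steps (3)–(5). We must check that the reshaped tensor really satisfies all its hypotheses: nonzero observed entries, mod-$t$ fullness, and completability. We must also use that determinability of $\mc{T}_k(x^*)$ is unaffected by rescaling $x^*$, since the system \reff{eq:flat_k_det} is homogeneous in $\mc{A}$. The base case also needs care, because it is the only place where row/column-fullness enters through Theorem~\ref{thm:connect<->rank-1}.
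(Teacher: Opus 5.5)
Your proposal is correct and follows essentially the same induction as the paper's proof: flatten along the index $k$ from determinability, use extractability to make both restricted vectors nonzero multiples of $x^*$, and apply the induction hypothesis to $(\mc{T}_k(x^*), S_1^{(k)})$. The differences are cosmetic. You treat the transposed alternative in the base case $m=2$ explicitly, which the paper leaves implicit. You also obtain $u_k \propto v_k$ after the induction from the product of the $\alpha_l$, whereas the paper reads it off directly from $U=\alpha_k V$ and mod-$k$ fullness.
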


\begin{proof}
We prove by induction on $m$.
For the base step $m=2$, the conclusion is implied by Theorem~\ref{thm:connect<->rank-1}(ii).
Now assume the conclusion holds for the order $m-1$. 
Suppose $\mc{A}$ has two rank one completions:
$u_1 \otimes \cdots \otimes u_m$, $v_1 \otimes \cdots \otimes v_m$.
We need to show that each $u_k$ is a multiple of $v_k$.
Since $(\mc{A}, \Omega)$ is rank one determinable, there exists $k \in [m]$ such that $(\Flat_k(\mc{A}), \Omega^{(k)})$ is rank one extractable. 
The above implies $\Flat_k(\mc{A})$ has two rank one completions
\be\label{Uu_k=Vv_k}
\baray{l}
\Gamma_k^{-1}(u_1 \otimes \cdots \otimes u_{k-1} \otimes u_{k+1} \otimes \cdots \otimes u_m )  \cdot u_k^T, \\
\Gamma_k^{-1}(v_1 \otimes \cdots \otimes v_{k-1} \otimes v_{k+1} \otimes \cdots \otimes v_m ) \cdot v_k^T.
\earay
\ee
Observe that the two vectors
\[
\baray{l}
U \,\coloneqq\, \big( \Gamma_k^{-1}(u_1 \otimes \cdots \otimes u_{k-1} \otimes u_{k+1} \otimes \cdots \otimes u_m )  \big)_{S_1^{(k)}}, \\
V \,\coloneqq\, \big( \Gamma_k^{-1}(v_1 \otimes \cdots \otimes v_{k-1} \otimes v_{k+1} \otimes \cdots \otimes v_m )  \big)_{S_1^{(k)}}
\earay
\]
are both solutions to \reff{eq:flat_k_det}.
Since $\mc{A}_{i_1 \cdots i_m} \ne 0$ on $\Omega$, 
it is clear that $U, V$ are entirely nonzero.
Since $(\Flat_k(\mc{A}), \Omega^{(k)})$ is rank one extractable,
the solution space of  \reff{eq:flat_k_det} is one-dimensional.
So, $U = \af_k V$ for some scalar $\af_k \ne 0$.
Note that \reff{Uu_k=Vv_k} implies
\[
U_{\vec{i}} \cdot (u_k)_j \,=\, V_{\vec{i}} \cdot (v_k)_j
\quad \text{for all } \quad (\vec{i}, j) \in \Omega^{(k)},
\]
so $(u_k)_j = \af_k^{-1} (v_k)_j$ for all $j \in S_2^{(k)}$.
Since $\Omega$ is mod-$k$ full, $S_2^{(k)} = [n_k]$,
hence $u_k = \af_k^{-1} v_k$.
Let $x^* \in \re^{S_1^{(k)}}$ be the unique (up to scaling) solution to  \reff{eq:flat_k_det}, then
$
x^* = \beta_1 U = \beta_2 V
$
for nonzero scalars $\beta_1, \beta_2$.
Observe that the p.o.t. 
\[
\mc{T}_k(x^*) = \beta_1 \mc{T}_k(U) = \beta_2 \mc{T}_k(V)
\]
has two rank one completions
\[
\bt_1 u_1 \otimes \cdots \otimes u_{k-1} \otimes u_{k+1} \otimes \cdots \otimes u_m, \qquad
\bt_2 v_1 \otimes \cdots \otimes v_{k-1} \otimes v_{k+1} \otimes \cdots \otimes v_m.
\]
Note that $(\mc{T}_k(x^*), S_1^{(k)})$ is a p.o.t. of order $m-1$.
Since $S_1^{(k)}$ is mod-$t$ full for all $t \in [m] \setminus \{k\}$ 
and $\mc{T}_k(x^*)_{\vec{i}} \ne 0$ for all $\vec{i} \in S_1^{(k)}$,
by the inductive hypothesis, $\mc{T}_k(x^*)$ has a unique rank one completion, 
so there exist nonzero scalars $\af_l$ such that
\[
\bt_1 u_1 = \af_1  \bt_2v_1 ,  \qquad 
u_l = \af_l v_l \quad \text{for} \quad l \in [m] \setminus \{1, k\}.
\]
Also note $u_k = \af_k^{-1} v_k$.
This means the rank one completion for $(\mc{A}, \Omega)$ is unique.
\end{proof}

Under the assumption of Theorem~\ref{thm:det-->unique_comp},
we prove that if $(\mc{A}, \Omega)$ is rank one determinable, 
then it has a unique rank one completion.
However, the converse is not true, as shown in the following example.

\begin{example}
Consider the p.o.t. $\mc{A} \in \C^{3 \times 3 \times 4}$ with observed entries:
\[
\baray{l}
\mc{A}_{111} = \mc{A}_{221} = \mc{A}_{331} = \mc{A}_{132} 
= \mc{A}_{212} = \mc{A}_{322} = 1, \\
\mc{A}_{123} = \mc{A}_{233} = \mc{A}_{313} = \mc{A}_{124} = \mc{A}_{214} = 1.
\earay
\]
The solution space of \reff{eq:flat_k_det} is three-dimensional for $k=1,2$
and two-dimensional for $k=3$.
So, $(\mc{A}, \Omega)$ is not rank one determinable.
However, it has a unique rank one completion.
To see this, suppose (up to scaling)
\[
\mc{A} \,=\, (1,a,b) \otimes (1,c,d) \otimes (e,f,g,h)
\]
for nonzero $a,b,c,d,e,f,g,h \in \C$.
Comparing observed entries, one can see that
\[
\begin{gathered}
e=1, \quad ac=bd=df=af=bcf=cg=adg=bg=ch=ah=1, \\
df=af, \quad
cg=bg, \quad
ch=ah, \quad
ac=af, \quad
ac=ah,
\end{gathered}
\]
which implies $a=b=c=d=f=h$.
So, $bcf=adg$ forces $f=g$. 
Thus, all these parameters are equal.
Since $ac=bcf=1$, all of them must equal one.
Therefore, $(\mc{A}, \Omega)$ has the unique rank one completion
$
(1,1,1) \otimes (1,1,1) \otimes (1,1,1,1).
$
\end{example}

\section{A robust algorithm for rank one completion}\label{sc:TC_alg}

Let $(\mc{A}, \Omega)$ be a p.o.t. of order $m$.
We aim to find a rank one tensor $u_1 \otimes \cdots \otimes u_m$ 
in $\re^{n_1 \times \cdots \times n_m}$
such that
\be\label{TC_order_m}
\mc{A}_{i_1 \cdots i_m} \,=\, (u_1)_{i_1} \cdots (u_m)_{i_m}
\quad \text{for all} \quad (i_1\ddd i_m) \in \Omega.
\ee
In this section, we propose a recursive method to solve \reff{TC_order_m}.

For $k \in [m]$, 
consider the flattening matrix $\Flat_k(\mc A)$.
Note $(\Flat_k(\mc A), \Omega^{(k)})$ is a p.o.m. 
with the observation label set $\Omega^{(k)}$ given in \reff{Omg^k}.
Assume it has the rank one completion
$
\Flat_k(\mc A) = ab^T.
$
Then, the subvector ($S_1^{(k)}$ is defined in \reff{S1kS2k})
\[
(a)_{S_1^{(k)}} \,\coloneqq\, (a_{\vec{i}})_{\vec{i} \in S_1^{(k)}}
\] 
must be a solution to \reff{eq:flat_k_det} and thereby a solution to
\be\label{eq:Bkx=0}
B^{(k)}x \,=\, 0,
\ee
where $B^{(k)}$ is given in \reff{Bx=0}.
If $(\Flat_k(\mc A), \Omega^{(k)})$ is rank one extractable, 
then $\text{null}(B^{(k)})$ is one-dimensional.
That is, \reff{eq:Bkx=0} has a unique (up to scaling) nontrivial solution, say, $x^* \in \re^{S_1^{(k)}}$.
Then, the vector $b$ can be determined from
\[
\big( \Flat_k(\mc{A}) \big)_{\vec{i} j} \,=\, a_{\vec{i}} b_j \quad \text{for all} \quad (\vec{i}, j) \in \Omega^{(k)}.
\]
As in \reff{up_k}, the vector $x^*$ can be reshaped to the $(m-1)$st order p.o.t.
\[
\mc{T}_k(x^*) \,\in\, \re^{\mc{P}_k}
\]
with the observation label set $S_1^{(k)}$.
We can repeat the above procedure for the p.o.t. $(\mc{T}_k(x^*), S_1^{(k)})$.
This gives a recursive method to solve \reff{TC_order_m}, 
in which the tensor order decreases by one after each recursion.
This process terminates when we reach the matrix case (i.e., the order equals two).

We illustrate the above recursive procedure for $m=4$.
Suppose $(\Flat_4(\mc A), \Omega^{(4)})$ is rank one extractable.
Let $x^{(0)} \in \re^{S_1^{(4)}}$ be a nontrivial solution to \reff{eq:flat_k_det}.
Then we determine $u_4 \in \re^{n_4}$ such that 
$x^{(0)} u_4^T$ gives a rank one matrix completion for $\Flat_4(\mc A)$.
Next, reshape $x^{(0)}$ to obtain the third order p.o.t.
\[
\mc A_1 \coloneqq \mc{T}_4(x^{(0)}) \in \re^{n_1\times n_2\times n_3}
\]
with the observation label set $\Omega_1 \coloneqq S_1^{(4)}$.
Assume the p.o.m. $\Flat_3(\mc A_1)$ is rank one extractable.
Let $x^{(1)}$ be a nontrivial solution to \reff{eq:flat_k_det} for the p.o.t. $\mc{A}_1$.
Then, determine $u_3 \in \re^{n_3}$ such that $x^{(1)} u_3^T$ gives a rank one matrix completion for
$\Flat_3(\mc{A}_1)$.
Next, reshape $x^{(1)}$ to obtain the p.o.m.
\[
\mc{A}_2 \coloneqq \mc{T}_3(x^{(1)}) \in \re^{n_1\times n_2}
\]
with the observation label set $\Omega_2$.
Assume $(\mc{A}_2, \Omega_2)$ is rank one extractable. 
Let $x^{(2)}$ be a nontrivial solution to \reff{eq:flat_k_det} for the p.o.m. $\mc{A}_2$.
Then, determine $u_2 \in \re^{n_2}$ such that $x^{(2)}u_2^T$ gives a rank one matrix completion for $\mc{A}_2$.
Finally, let $u_1 = x^{(2)}$.
Then, under some general assumptions (see Theorem~\ref{thm:alg_unique}),
we get the rank one tensor completion 
$u_1 \otimes u_2 \otimes u_3 \otimes u_4$ for $(\mc{A}, \Omega)$.

In computational practice, how do we choose $k$ such that $(\Flat_k(\mc A),\Omega^{(k)})$ is rank one extractable?
This requires the nullspace of $B^{(k)}$ to be one-dimensional.
However, when observed tensor entries have noise, 
$B^{(k)}$ may have only trivial nullspace.
For such a case, we can estimate the nullspace as follows
(let $B^{(k)}$ be $\ell$-by-$n$):
\bit
\item If $\ell \ge n$, then $B^{(k)}$ has singular values 
\be\label{sigs}
\sig_1 \ge \cdots \ge \sig_n \ge 0.
\ee
We compute the right singular vector $v_n$ of $B^{(k)}$ corresponding to $\sig_n$.
Then, $\Span\{v_n\}$ is an approximation for the nullspace of $B^{(k)}$.

\item If $\ell < n$, then $B^{(k)}$ must have a nontrivial nullspace, 
and we compute $\text{null}(B^{(k)})$ directly.
\eit

Summarizing the above, we now present a recursive algorithm that solves \reff{TC_order_m}.

\begin{alg}\label{alg:recursive}
Let $(\mc{A}, \Omega)$ be a p.o.t. of order $m \ge 2$.
Do:

\bnum

\item[Step~1] For each $k \in [m]$,
formulate $B^{(k)}$ as in \reff{Bx=0}.
Compute the minimum singular value of $B^{(k)}$, denoted by $\sig_{\min}(B^{(k)})$.
Choose $k$ such that
\be\label{sig(Bk)}
\sig_{\min}(B^{(k)}) \,=\, \min_{i \in [m]} \sig_{\min}(B^{(i)}).
\ee
If the above $k$ is not unique, choose the $k$ for which the gap 
$\sig_{n-1} - \sig_n$ is maximum, where $\sig_n, \sig_{n-1} $ are as in \reff{sigs}.

\item[Step~2] Compute the vector $x^* \in \re^{S_1^{(k)}}$ as follows
(let $B^{(k)}$ be $\ell$-by-$n$):
\bit
\item[i)] If $\ell \ge n$, compute the right singular vector
$v_n$ of $B^{(k)}$ corresponding to $\sig_{\min}(B^{(k)})$. 
Let $x^* = v_n$.

\item[ii)] If $\ell < n$, directly solve $B^{(k)}x = 0$ for a nontrivial solution $x^*$.
Normalize $x^*$ so that $\|x^*\|=1$. 
\eit

\item[Step~3] 
As in \reff{up_k}, reshape $x^*$ to obtain the p.o.t. $(\mc{T}_k(x^*), S_1^{(k)})$ of order $m-1$.

\item[Step~4] 
If $m>2$, compute a rank one tensor completion for the p.o.t. $\mc{T}_k(x^*)$ as
\be\label{u_1...u_m - u_k}
u_1 \otimes \cdots \otimes u_{k-1} \otimes u_{k+1} \otimes \cdots \otimes u_m.
\ee
If $m=2$, then $\mc{T}_k(x^*)$ has order one (i.e., is a vector), 
and $\mc{P}_k$ is one-dimensional, say, $\mc{P}_k = [n_t]$ for some $t$.
If $S_1^{(k)} \ne [n_t]$,
we extend $\mc{T}_k(x^*)$ to a full dimensional vector in $\re^{\mc{P}_k}$.
Otherwise, $\mc{T}_k(x^*)$ is automatically a rank one completion for itself.

\item[Step~5] Obtain $u_k \in \re^{n_k}$ by solving the linear least squares problem
\be\label{LS_induct}
\min_{u} \quad \| \mc{A} - u_1 \otimes \cdots \otimes u_{k-1} \otimes u \otimes u_{k+1} \otimes \cdots \otimes u_m \|_{\Omega}.
\ee

\item[Step~6] Output the rank one decomposing tuple $(u_1 \ddd u_m)$.

\enum
\end{alg}

\begin{remark}
In Step~4, Algorithm~\ref{alg:recursive} can be applied recursively to the $(m-1)$st order p.o.t. $\mc{T}_k(x^*)$ to obtain the rank one completion \reff{u_1...u_m - u_k}.
Each recursive step reduces the tensor order by one, 
and it terminates when we reach the matrix case.
\end{remark}

We apply Algorithm~\ref{alg:recursive} to $(\mc{A},\Omega)$ recursively as follows.
Set $(\mc{A}_0,\Omega_0)\coloneqq(\mc A,\Omega)$.
Let $k_0 \in [m]$ be the index chosen in Step~1, 
and let $x^{(0)}$ be the vector $x^*$ obtained from $B_0^{(k_0)}$ in Step~2.
Step~3 produces the p.o.t. $(\mc{T}_{k_0}(x^{(0)}), S_1^{(k_0)})$.
Define 
\[
\mc{A}_1 \coloneqq \mc{T}_{k_0}(x^{(0)}),
\quad \Omega_1 \coloneqq S_1^{(k_0)}.
\]
We apply Algorithm~\ref{alg:recursive} again to the $(m-1)$st order p.o.t. $(\mc{A}_1, \Omega_1)$.
Let $k_1 \in [m] \setminus \{k_0\}$ be the index chosen in Step~1,
and let $x^{(1)}$ be the vector $x^*$ obtained from $B_1^{(k_1)}$ in Step~2.
Step~3 produces the p.o.t. $\mc{T}_{k_1}(x^{(1)})$.
Define 
\[
\mc{A}_2 \,\coloneqq\, \mc{T}_{k_1}(x^{(1)}),
\]
and let $\Omega_2$ denote its observation label set.
Again, we apply Algorithm~\ref{alg:recursive} to the $(m-2)$nd order p.o.t. $(\mc{A}_2, \Omega_2)$.
Let $k_2 \in [m] \setminus \{k_0, k_1 \}$ be the index chosen in Step~1,
and let $x^{(2)}$ be the vector $x^*$ obtained from $B_2^{(k_2)}$ in Step~2.
Step~3 produces the p.o.t. $\mc{A}_3 \coloneqq \mc{T}_{k_2}(x^{(2)})$.
This recursive procedure gives the chain of p.o.t.'s
\be\label{chain}
(\mc{A}_0, \Omega_0) 
\overset{k_0}{\longrightarrow} (\mc{A}_1, \Omega_1) 
\overset{k_1}{\longrightarrow} (\mc{A}_2, \Omega_2)
\overset{k_2}{\longrightarrow}
\cdots 
\overset{k_{m-2}}{\longrightarrow} (\mc{A}_{m-1}, \Omega_{m-1}).
\ee
Note that each p.o.t. $\mc A_j$ has order $m-j$ and the index
\be\label{k_j}
k_j \in [m] \setminus \{ k_0 \ddd k_{j-1} \}.
\ee

\begin{example}
Consider the p.o.t. $\mc{A} \in \re^{2 \times 2 \times 2 \times 2 \times 2}$ with observed entries:
\[
\mc{A}_{11111} = \mc{A}_{11112} = \mc{A}_{11222  } = \mc{A}_{12111} 
= \mc{A}_{21211} = \mc{A}_{21222} =  \mc{A}_{22121} = 1.
\]
When Algorithm~\ref{alg:recursive} is applied recursively,
the chain of p.o.t.'s as in \reff{chain} is
\[
\baray{l}
(\mc{A}_1)_{1     1     1     1} 
= (\mc{A}_1)_{1     1     2     2}
= (\mc{A}_1)_{1     2     1     1}
= (\mc{A}_1)_{2     1     2     1}
= (\mc{A}_1)_{2     1     2     2}
= (\mc{A}_1)_{2     2     1     2} = \frac{1}{\sqrt{6}}, \\
(\mc{A}_2)_{1     1     1}
= (\mc{A}_2)_{1     1     2}
= (\mc{A}_2)_{1     2     1}
= (\mc{A}_2)_{2     1     2}
= (\mc{A}_2)_{2     2     1} = \frac{1}{\sqrt{5}}, \\
(\mc{A}_3)_{1     1}
=(\mc{A}_3)_{1     2}
= (\mc{A}_3)_{2     1}
= (\mc{A}_3)_{2     2} = \frac{1}{2},
\qquad
(\mc{A}_4)_1 = (\mc{A}_4)_2 = \frac{1}{\sqrt{2}}.
\earay
\]
The obtained rank one completion is
$\mc{A} = u_1 \otimes u_2 \otimes u_3 \otimes u_4 \otimes u_5$ with
\[
\baray{c}
u_1 = u_2 = \big( \frac{1}{\sqrt{2}},
\frac{1}{\sqrt{2}} \big),
\,\, u_3 = \big( \frac{2}{\sqrt{5}},
\frac{2}{\sqrt{5}} \big),
\,\, u_4 = \big( \sqrt{\frac{5}{6}}, \sqrt{\frac{5}{6}} \big),
\,\, u_5 = (\sqrt{6}, \sqrt{6}).
\earay
\]
\end{example}

\begin{theorem}\label{thm:alg_unique}
Assume $\Omega$ is mod-$t$ full for every $t \in [m]$ and
$\mc{A}_{i_1 \cdots i_m} \ne 0$ for all $(i_1 \ddd i_m) \in \Omega$.
Suppose $(\mc{A}, \Omega)$ is rank one completable and rank one determinable.
Let $(\mc{A}_j,\Omega_j, k_j)$ be as in \reff{chain}.
Assume $(\Flat_{k_j}(\mc{A}_j), \Omega_j^{(k_j)})$ is rank one extractable for all $0 \le j \le m-2$.
Then, the tuple $(u_1 \ddd u_m)$ produced by Algorithm~\ref{alg:recursive} 
gives the unique rank one completion
$
\mc{A} = u_1 \otimes \cdots \otimes u_m.
$
\end{theorem}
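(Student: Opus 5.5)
We argue by induction on the order $m$, following the recursive structure of Algorithm~\ref{alg:recursive} along the chain \reff{chain}. Uniqueness of the completion, once we know the output is a completion, is immediate from Theorem~\ref{thm:det-->unique_comp}. So the real content is to show that the tuple $(u_1 \ddd u_m)$ produced by the algorithm satisfies $\mc{A}_{i_1\cdots i_m} = (u_1)_{i_1}\cdots(u_m)_{i_m}$ on $\Omega$.

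Let $u_1^\circ \otimes \cdots \otimes u_m^\circ$ be a rank one completion of $(\mc{A},\Omega)$. Since all entries of $\mc{A}$ on $\Omega$ are nonzero, every $u^\circ_l$ is nonzero on the indices appearing in $\Omega$; by mod-$t$ fullness it is entirely nonzero. Put $k=k_0$. The restriction
\[
U \coloneqq \big(\Gamma_k^{-1}(u^\circ_1\otimes\cdots\otimes u^\circ_{k-1}\otimes u^\circ_{k+1}\otimes\cdots\otimes u^\circ_m)\big)_{S_1^{(k)}}
\]
is an entirely nonzero solution of \reff{eq:flat_k_det}, exactly as in the proof of Theorem~\ref{thm:det-->unique_comp}. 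Because $(\Flat_{k}(\mc{A}),\Omega^{(k)})$ is rank one extractable, $\text{null}(B^{(k)})=\Span\{U\}$. Hence $\sig_{\min}(B^{(k)})=0$ when $\ell\ge n$, and Step~2 returns $x^{(0)}=\beta U$ with $\beta\ne0$ in either case.

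The subtle point is Step~1: it may not select this $k$ if several flattenings have zero minimal singular value. However, the theorem assumes the chain indices $k_j$ are those actually chosen and that each such flattening is extractable. So the argument only ever uses the chosen $k_j$, and I will phrase it that way.

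It follows that $\mc{A}_1=\mc{T}_k(x^{(0)})=\beta\,\mc{T}_k(U)$ is rank one completable, with completion $\beta\, u^\circ_1\otimes\cdots\widehat{u^\circ_k}\cdots\otimes u^\circ_m$. It is also nonzero on $\Omega_1=S_1^{(k)}$, and $\Omega_1$ is mod-$t$ full for all $t\ne k$, since mod-$t$ fullness of $\Omega$ passes to the projection. The remaining chain flattenings are extractable by assumption. Moreover, $(\mc{A}_1,\Omega_1)$ is rank one determinable: apply Definition~\ref{def:rank1-det} along the chain, using extractability at each level and, at the bottom level, the matrix case.

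By the induction hypothesis, Step~4 therefore yields $u_1\otimes\cdots\widehat{u_k}\cdots\otimes u_m$, which is a (unique) completion of $\mc{A}_1$. In particular $\mc{T}_k$ of the product agrees with $\beta\,\mc{T}_k(U)$ on $\Omega_1$.

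In Step~5 the least squares objective \reff{LS_induct} then has residual
\[
\mc{A}_{\vec i j} - \beta U_{\vec i}\, u_j = U_{\vec i}\,\big((u^\circ_k)_j - \beta u_j\big).
\]
Setting $u=\beta^{-1}u^\circ_k$ gives residual zero. This minimizer is unique because $\Omega$ is mod-$k$ full and $U$ is entirely nonzero. Hence $u_k=\beta^{-1}u^\circ_k$, and $u_1\otimes\cdots\otimes u_m$ reproduces $\mc{A}$ on $\Omega$.

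For the base case $m=2$, use Theorem~\ref{thm:det_comp}(iv) together with Theorem~\ref{thm:connect<->rank-1}. Here the chosen flattening is extractable and the completion exists, so the null vector is entirely nonzero, a multiple of the true factor restricted to $S_1^{(k)}$. Step~4's extension is then harmless by mod-fullness, namely $S_1^{(k)}=[n_t]$. The least squares step recovers the other factor as above.

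I expect the main obstacle to be bookkeeping rather than mathematics. One must verify that the hypotheses (nonzero entries, mod-fullness, completability, determinability) descend to $(\mc{A}_1,\Omega_1)$, and handle the scaling factor $\beta$ consistently through the recursion.
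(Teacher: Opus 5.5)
Your proposal is correct and follows essentially the same route as the paper. Both argue by induction on $m$ along the chain: extractability of the chosen flattening makes $x^{(0)}$ a nonzero multiple of the restricted true factor, the inductive hypothesis is applied to $(\mc{A}_1,\Omega_1)$, $u_{k_0}$ is recovered from the least squares problem that mod-$k_0$ fullness makes uniquely solvable, and uniqueness comes from Theorem~\ref{thm:det-->unique_comp}. Your explicit derivation of the determinability of $(\mc{A}_1,\Omega_1)$ from the chain-extractability assumptions is a worthwhile bit of extra care: the paper only asserts it ``by the given assumption,'' but the index in Definition~\ref{def:rank1-det} for $\mc{A}$ need not be $k_0$.
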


\begin{proof}
We prove by induction on $m$ that Algorithm~\ref{alg:recursive} gives a rank one completion.
The uniqueness is implied by Theorem~\ref{thm:det-->unique_comp}.

For the base step $m=2$, $A \coloneqq \mc{A}$ is a matrix.
Without loss of generality, we can assume $k_0=2$ in Step~1.
Then the matrix $B^{(2)}$ as in \reff{Bx=0} is the same as the matrix $B$ in \reff{Bx=0_mx}.
Since $(A, \Omega)$ is rank one extractable, $\text{null}(B)$ is one-dimensional.
Then, for the vector $x^*$ produced in Step~2, 
we have $\text{null}(B) = \Span\{x^*\}$,
and $x^*$ is a solution to \reff{eq:mx_det}.
By Theorem~\ref{thm:connect<->rank-1}(ii), 
the graph $G(V_1, V_2, \Omega)$ is connected.
So, $x^*$ must be entirely nonzero by Theorem~\ref{thm:det_comp}(iv).
Note that $\mc{T}_{k_0}(x^*)$ is a vector, so 
Step~4 produces $u_1 = x^*$, as $\Omega$ is row-full.
By Theorem~\ref{thm:det_comp}(ii), 
there must exist $v_2$ such that $A = u_1v_2^T$.
This means $v_2$ is a minimizer of 
\be\label{LS_mx}
\min_u \quad \| A - u_1 \otimes u \|_{\Omega},
\ee
and the minimum value is zero.
Note that $u_2$ produced in Step~5 is also a solution to \reff{LS_mx}.
Since $\Omega$ is column-full,
$u_2 \in \re^{n_2}$ can be uniquely determined by
\[
(u_2)_j \,=\, A_{ij} / (u_1)_i \quad \text{for all } (i,j) \in \Omega.
\]
This means $u_2 = v_2$.
Hence, $A = u_1u_2^T$ is a rank one completion for $(A, \Omega)$.

For the inductive step, assume the conclusion holds for the order $m-1$. 
Since $(\mc{A}_0, \Omega_0)$ is rank one completable, there exists a rank one completion  
\be\label{A=tu}
\mc{A}_0 \,=\, v_1 \otimes \cdots \otimes v_m.
\ee
Recall that $\Omega_1 = S_1^{(k_0)}$ as in \reff{S1kS2k}.
Then, the vector
\be\label{xtilde}
\hat{x} \,\coloneqq\, \big( \Gamma_{k_0}^{-1}( v_1 \otimes \cdots \otimes v_{k_0-1} \otimes v_{k_0+1} \otimes \cdots \otimes v_m ) \big)_{\Omega_1} 
\ee
is a solution to \reff{Bx=0}, i.e., $B^{(k_0)} \hat{x} = 0$.
Since $\mc{A}_{i_1 \cdots i_m} \ne 0$ on $\Omega$, 
it is clear that $\hat{x}$ is entirely nonzero.
Since $(\Flat_{k_0}(\mc{A}_0),\Omega_0^{(k_0)})$ is rank one extractable, 
$\text{null}(B^{(k_0)})$ is one-dimensional.
Hence, the vector $x^{(0)} = x^*$ obtained in Step~2 satisfies
$x^{(0)} = \af_{k_0} \hat{x}$ for some scalar $\af_{k_0} \ne 0$.
Step~3 reshapes $x^{(0)}$ to the $(m-1)$st order p.o.t.
\be\label{t(x0) = afT(xtild)}
\mc{A}_1 \coloneqq \mc{T}_{k_0}(x^{(0)}) 
= \af_{k_0} \mc{T}_{k_0}(\hat{x}).
\ee
By \reff{xtilde} and \reff{t(x0) = afT(xtild)}, $(\mc{A}_1, \Omega_1)$ has the rank one completion:
\[
\af_{k_0} v_1 \otimes \cdots \otimes v_{k_0-1} \otimes v_{k_0+1} \otimes \cdots \otimes v_m.
\]
By the given assumption, $(\mc{A}_1, \Omega_1)$ is rank one determinable,
$\Omega_1$ is mod-$t$ full for all $t \in [m] \setminus \{k_0\}$,
and all observed entries of $\mc{A}_1$ are nonzero.
By the inductive hypothesis, the algorithm gives the unique rank one completion
\[
u_1 \otimes \cdots \otimes u_{k_0-1} \otimes u_{k_0+1} \otimes \cdots \otimes u_m
\]
for $(\mc{A}_1, \Omega_1)$.
Hence, there exist scalars $\af_l \ne 0$ such that
\be\label{u=aftu}
u_l = \af_l v_l \quad \text{for all} \quad l \in [m] \setminus \{k_0\}. 
\ee
By \reff{A=tu} and \reff{u=aftu},
\begin{align*}
\mc{A} 
&= v_1 \otimes \cdots \otimes v_{k_0-1} \otimes v_{k_0} \otimes v_{k_0+1} \otimes \cdots \otimes v_m \\
&= u_1 \otimes \cdots \otimes u_{k_0-1} \otimes u^* \otimes u_{k_0+1} \otimes \cdots \otimes u_m,
\end{align*}
where
$
u^* = \af_{k_0} v_{k_0} / (\af_1 \cdots \af_m).
$
So, $u^*$ is a minimizer of \reff{LS_induct}, and the minimum value is zero.
Note that $u_{k_0}$ produced in Step~5 is also a solution to \reff{LS_induct}.
Since $\Omega$ is mod-$k_0$ full, 
the solution to \reff{LS_induct} is unique, i.e., $u_{k_0} = u^*$.
Hence, $u_1 \otimes \cdots \otimes u_m$ is a rank one completion for $(\mc{A}, \Omega)$.
\end{proof}

\section{Perturbation analysis}\label{sc:perturb}

Let $(\mc{A}, \Omega)$ be a p.o.t. of order $m$.
In applications, there is often noise in the observed tensor entries.
We consider the perturbation
\[
\widetilde{\mc{A}} \,=\, \mc{A}  + \dt \mc{A},
\]
where $\dt \mc{A}$ denotes the noise tensor.
Suppose $(\mc{A}, \Omega)$ is rank one completable, 
and $u_1 \otimes \cdots \otimes u_m$ is a rank one completion given by Algorithm~\ref{alg:recursive}.
Let $(\tu_1 \ddd \tu_m)$ be the output of Algorithm~\ref{alg:recursive} applied to $(\widetilde{\mc{A}}, \Omega)$.
When $\|\dt \mc{A}\|_{\Omega}$ is sufficiently small, we show that
$u_1 \otimes \cdots \otimes u_m$ is close to $\tu_1 \otimes \cdots \otimes \tu_m$.

Recall that Algorithm~\ref{alg:recursive} produces the chain of $(\mc{A}_j, \Omega_j)$ as in \reff{chain}.
Similarly, we apply Algorithm~\ref{alg:recursive} to the perturbation 
$(\widetilde{\mc{A}}_0,\tomg_0)\coloneqq(\widetilde{\mc{A}},\tomg)$.
We assume the observation label set remains unchanged, i.e., $\Omega = \tomg$.
Let $\tk_0 \in [m]$ be the index chosen in Step~1, 
and let $\tx^{(0)}$ be the vector $x^*$ obtained from $\widetilde{B}_0^{(\tk_0)}$ in Step~2.
Up to a sign, we can generally assume $(x^{(0)})^T\tx^{(0)} \ge 0$.
Step~3 produces the $(m-1)$st order p.o.t. $\widetilde{\mc{A}}_1 \coloneqq \mc{T}_{\tk_0}(\tx^{(0)})$, and let $\tomg_1 \coloneqq S_1^{(\tk_0)}$ be its observation label set.
Again, apply Algorithm~\ref{alg:recursive} to $(\widetilde{\mc{A}}_1, \tomg_1)$.
Let $\tk_1 \in [m] \setminus \{\tk_0\}$ be the index chosen in Step~1,
and let $\tx^{(1)}$ be the vector obtained from $\widetilde{B}_1^{(\tk_1)}$ in Step~2.
Again, up to a sign, we can assume $(x^{(1)})^T\tx^{(1)} \ge 0$.
Step~3 produces the p.o.t. $\widetilde{\mc{A}}_2 \coloneqq \mc{T}_{\tk_1}(\tx^{(1)})$,
and let $\tomg_2$ denote its label set.
Repeating the above gives the chain of p.o.t.'s
\be\label{chain_pert}
(\widetilde{\mc{A}}_0, \tomg_0) 
\overset{\tk_0}{\longrightarrow} (\widetilde{\mc{A}}_1, \tomg_1) 
\overset{\tk_1}{\longrightarrow} (\widetilde{\mc{A}}_2, \tomg_2)
\overset{\tk_2}{\longrightarrow}
\cdots 
\overset{\tk_{m-2}}{\longrightarrow} (\widetilde{\mc{A}}_{m-1},\tomg_{m-1}).
\ee
Note that each $\widetilde{\mc{A}}_j$ has order $m-j$ and 
$
\tk_j \in [m] \setminus \{ \tk_0 \ddd \tk_{j-1} \}.
$

\begin{theorem}\label{thm:tensor_perturb}
Let $(\mc{A}, \Omega)$ be an $m$th order p.o.t. such that
$
\mc{A} = u_1 \otimes \cdots \otimes u_m
$
is the rank one completion given by Algorithm~\ref{alg:recursive}.
Assume $(\mc{A}, \Omega)$ is rank one determinable,
$\Omega$ is mod-$t$ full for every $t \in [m]$, and 
$\mc{A}_{i_1 \cdots i_m} \ne 0$ for all $(i_1 \ddd i_m) \in \Omega$.
Consider a perturbation $\widetilde{\mc{A}} \coloneqq \mc{A} + \dt \mc{A}$ on $\Omega = \tomg$.
Let $(\mc{A}_j,\Omega_j, k_j)$ and $(\widetilde{\mc{A}}_j,\tomg_j, \tk_j)$
be as in \reff{chain} and \reff{chain_pert} respectively.
For $j = 0,1 \ddd m-2$, suppose
\bnum
\item[(a)] each $(\Flat_{k_j}(\mc{A}_j), \Omega_j^{(k_j)})$ is rank one extractable;

\item[(b)] there exists some scalar $\mu_j > 0$ such that
\[
\min_{i \in [m] \setminus \{ k_0 \ddd k_{j} \}} \sig_{\min}(B_j^{(i)}) \,\ge\, \sig_{\min}(B_j^{(k_j)}) + \mu_j.
\]
\enum
Let $(\tu_1 \ddd \tu_m)$ be the output of Algorithm~\ref{alg:recursive} 
applied to $(\widetilde{\mc{A}}, \tomg)$.
When $\|\dt \mc{A}\|_{\Omega}$ is sufficiently small, we have:
\bnum
\item[(i)] For $j = 0,1 \ddd m-2$, it holds that
$\tk_j = k_j$, $\tomg_j = \Omega_j$, and there exists a constant $\rho_j > 0$ such that
\be\label{tA-A<=dA}
\| \widetilde{\mc{A}}_{j+1} - \mc{A}_{j+1} \|_{\Omega_{j+1}} \,\le\, \rho_j \|\dt \mc{A}\|_{\Omega}.
\ee

\item[(ii)] There exist constants $C_1>0$, $C_2 > 0$, depending on $(u_1 \ddd u_m)$, such that
\begin{gather}
\label{tu_i-u_i<=dA}
\| \tu_t - u_t \| \,\le\, C_1 \|\dt \mc{A}\|_{\Omega} \quad \text{for} \quad t = 1 \ddd m, \\
\label{tu-u<=dA}
\| \tu_1 \otimes \cdots \otimes \tu_m - u_1 \otimes \cdots \otimes u_m \| \,\le\, C_2 \|\dt \mc{A}\|_{\Omega}.
\end{gather}
\enum
\end{theorem}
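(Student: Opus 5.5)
We argue by induction along the chain \reff{chain}, treating one level $j$ at a time; the tools are continuity of singular values (Weyl's inequality) and the Davis--Kahan/Wedin $\sin\theta$ theorem for singular vectors. The basic observation is that each $B_j^{(i)}$ depends linearly on the observed entries of $\mc{A}_j$. Each row of $B_j^{(i)}$ has at most two nonzero entries, and these are observed entries of $\mc{A}_j$. Hence, on the fixed label set $\Omega_j$,
\[
\|\widetilde{B}_j^{(i)} - B_j^{(i)}\| \,\le\, c_j \|\widetilde{\mc{A}}_j - \mc{A}_j\|_{\Omega_j}
\]
for a combinatorial constant $c_j$ (for instance, the square root of the largest number of rows in which a single entry appears, times $\sqrt{2}$).

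\emph{Step (i) at level $j$.} Assume inductively that $\tk_l = k_l$, $\tomg_l = \Omega_l$, and $\|\widetilde{\mc{A}}_j - \mc{A}_j\|_{\Omega_j} \le \rho_{j-1}\|\dt\mc{A}\|_\Omega$, with $\rho_{-1}=1$.
\bit
\item[1.] By Weyl's inequality, $|\sig_{\min}(\widetilde{B}_j^{(i)}) - \sig_{\min}(B_j^{(i)})| \le c_j\rho_{j-1}\|\dt\mc{A}\|_\Omega$ for every admissible $i$.
\item[2.] Assumption (b) gives a gap $\mu_j$ between $\sig_{\min}(B_j^{(k_j)})$ and the other candidates. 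So if $\|\dt\mc{A}\|_\Omega < \mu_j/(2c_j\rho_{j-1})$, Step~1 still selects $\tk_j = k_j$. Since Step~1 uses strict minimality here, the tie-breaking rule plays no role.
\item[3.] By assumption (a), $\text{null}(B_j^{(k_j)})$ is one-dimensional and spanned by $x^{(j)}$. Therefore $\sig_{\min}(B_j^{(k_j)}) = 0$, and the next singular value $\sig_{n-1}$ is some $\gamma_j>0$. (If $\ell<n$, extractability forces $n-1\le \ell$, so this also covers the square-root spectrum.)
\item[4.] Wedin's theorem gives $\sin\angle(\tx^{(j)}, x^{(j)}) \le 2c_j\rho_{j-1}\|\dt\mc{A}\|_\Omega/\gamma_j$. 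With the sign normalization $(x^{(j)})^T\tx^{(j)} \ge 0$ and unit norms, this yields $\|\tx^{(j)} - x^{(j)}\| \le \sqrt{2}\,\sin\angle$.
\item[5.] Since $\tk_j = k_j$ and the label sets agree, $S_1^{(k_j)}$ is the same for both, so $\tomg_{j+1} = \Omega_{j+1}$. Reshaping by $\mc{T}_{k_j}$ is an isometry, which gives \reff{tA-A<=dA} with $\rho_j = 2\sqrt{2}\,c_j\rho_{j-1}/\gamma_j$.
\eit

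One subtlety: for $\ell<n$ the algorithm "directly solves" $\widetilde{B}x=0$. For small noise, the perturbed nullspace must remain one-dimensional. This holds because $\sig_{n-1}(\widetilde{B}) \ge \gamma_j/2 > 0$, and the lone nullvector is then the smallest right singular vector, so the same bound applies.

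\emph{Step (ii), recovering the $u_t$.} The $u_t$ are recovered by back-substitution along the chain.
\bit
\item[1.] \emph{Last level.} At level $m-2$ we have a matrix, and $u_{\text{last}}$ is taken as $x^{(m-2)}$. Mod-fullness makes $S_1 = [n_t]$, so no extension is needed and the bound comes directly from (i).
\item[2.] \emph{Each earlier level.} Step~5 solves a least squares problem in $u_{k_j}$. Since $\Omega_j$ is mod-$k_j$ full, it decouples coordinatewise:
\[
u_{k_j,s} \,=\, \frac{\sum_{\omega} \mc{A}_{j,\omega}\, p_\omega}{\sum_\omega p_\omega^2},
\]
where $\omega$ ranges over labels with $k_j$-th index equal to $s$, and $p_\omega$ is the product of the other already-computed factors at $\omega$.
\item[3.] In the exact case every $p_\omega \ne 0$, because every observed entry is nonzero and $\mc{A}=u_1\otimes\cdots\otimes u_m$ on $\Omega$. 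Hence the denominators are bounded below by a positive constant depending on $(u_1\ddd u_m)$.
\item[4.] The formula is therefore a smooth (rational) function of the data $\mc{A}_j$ and the previously computed factors near the exact point. A local Lipschitz bound, combined with induction from the last level back to level $0$, yields \reff{tu_i-u_i<=dA}.
\item[5.] \reff{tu-u<=dA} then follows by telescoping:
\[
\tu_1\otimes\cdots\otimes\tu_m - u_1\otimes\cdots\otimes u_m \,=\, \sum_t \tu_1\otimes\cdots\otimes(\tu_t-u_t)\otimes u_{t+1}\otimes\cdots\otimes u_m,
\]
with the $\|\tu_l\|$ bounded by $\|u_l\|+C_1\|\dt\mc{A}\|_\Omega$.
\eit

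\emph{Main obstacle.} The delicate point is Step (i). The index selection must stay stable, and the algorithm's $u$'s for the perturbed data must be compared to the exact $u$'s with consistent scalings. Scaling is fixed by the unit-norm, sign-normalized singular vectors at each level, and Step~5 absorbs the remaining scale, so the comparison is well defined. The other care point is making all the "sufficiently small" thresholds explicit: the threshold is the minimum over $j$ of $\mu_j/(2c_j\rho_{j-1})$, $\gamma_j/(2c_j\rho_{j-1})$, and the radius at which the Step~5 denominators stay bounded away from zero.
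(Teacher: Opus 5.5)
Your proposal is correct and follows essentially the same route as the paper: Weyl's inequality plus the gap in assumption (b) keep the index choices fixed, a one-dimensional nullspace perturbation bound together with the reshaping isometry gives (i), and coordinatewise least squares plus telescoping (your backward induction along the chain is the paper's induction on $m$) gives (ii). The differences are cosmetic: you cite Wedin's $\sin\theta$ theorem where the paper proves its own appendix lemma (Theorem~\ref{thm:mx_null_perturb}), and you invoke local Lipschitz continuity of the rational least-squares formula where the paper bounds the ratio $w^Ta/w^Tw$ explicitly.
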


\begin{proof}
(i) For $j=0$,
it is clear that $\tomg_0 = \tomg = \Omega = \Omega_0$.
For $i \in [m]$, 
let $B_0^{(i)}$ and $\widetilde{B}_0^{(i)}$ be the matrices constructed from 
$(\mc{A}_0, \Omega_0)$ and $(\widetilde{\mc{A}}_0, \tomg_0)$ as in \reff{Bx=0}.
Denote
\[
\dt \mc{A}_0 \coloneqq \widetilde{\mc{A}}_0 - \mc{A}_0, 
\quad \dt B_0^{(i)} \coloneqq \widetilde{B}_0^{(i)} - B_0^{(i)}.
\]
The nonzero entries of $B_0^{(i)}$ are observed tensor entries (up to a sign) of $\mc{A}_0$,
so there exists a constant $D_0 > 0$, depending on $\Omega_0$, such that
\be\label{dB0<=dA0}
\|\dt B_0^{(i)}\| \,\le\, D_0 \|\dt \mc{A}_0 \|_{\Omega_0}.
\ee
Since the above holds for all $i \in [m]$, we have
\be\label{dB<=mu/3_0}
\max_{i \in [m]} \|\dt B_0^{(i)}\| \,\le\, \frac{\mu_0}{3},
\ee 
when $\|\dt \mc{A}\|_{\Omega}$ is sufficiently small.
By Weyl's theorem \cite[Corollary~5.1]{demmel97}, for all $i \in [m]$,
\be\label{eq:weyl_0}
| \sig_{\min}(\widetilde{B}_0^{(i)}) - \sig_{\min}(B_0^{(i)}) | \,\le\, \|\dt B_0^{(i)}\|.
\ee
Let $k_0 \in [m]$ be the index chosen in Step~1 of Algorithm~\ref{alg:recursive} applied to $(\mc{A}_0, \Omega_0)$.
Then assumption~(b) gives
\[
\min_{i \in [m] \setminus \{k_0\}} \sig_{\min}(B_0^{(i)}) \,\ge\, \sig_{\min}(B_0^{(k_0)}) + \mu_0.
\]
Then, by \reff{eq:weyl_0}, for all $i \ne k_0$,
\be\label{sigBji>=?_0}
\baray{rcl}
\sig_{\min}(\widetilde{B}_0^{(i)}) &\ge& \sig_{\min}(B_0^{(i)}) - \|\dt B_0^{(i)}\| \\
&\ge& \sig_{\min}(B_0^{(k_0)}) + \mu_0 - \|\dt B_0^{(i)}\| \\ 
&\ge& \sig_{\min}(\widetilde{B}_0^{(k_0)}) - \|\dt B_0^{(k_0)}\| + \mu_0 - \|\dt B_0^{(i)}\|.
\earay
\ee
For all $i \ne k_0$, when $\|\dt \mc{A}\|_{\Omega}$ is sufficiently small, \reff{dB<=mu/3_0} implies
\begin{eqnarray*}
\sig_{\min}(\widetilde{B}_0^{(i)}) &\,\ge\,& \sig_{\min}(\widetilde{B}_0^{(k_0)}) + \frac{\mu_0}{3}, \\
\sig_{\min}(\widetilde{B}_0^{(k_0)}) &\,<\,& \min\limits_{i \in [m] \setminus \{k_0\}} \sig_{\min}(\widetilde{B}_0^{(i)}).
\end{eqnarray*}
This forces $\tk_0 = k_0$ and hence $\tomg_1 = \Omega_1$.

By assumption~(a), $\text{null}(B_0^{(k_0)})$ is one-dimensional.
Recall that $x^{(0)}$ (resp., $\tx^{(0)}$) is the unit length vector computed from $B_0^{(k_0)}$
(resp., $\widetilde{B}_0^{(\tk_0)} = \widetilde{B}_0^{(k_0)}$) in Step~2 of Algorithm~\ref{alg:recursive}.
Up to a sign, we can assume $(x^{(0)})^T\tx^{(0)} \ge 0$.
By Theorem~\ref{thm:mx_null_perturb}, there exists $C_0 > 0$ such that
\[
\|\tx^{(0)} - x^{(0)}\| \,\le\, C_0 \|\dt B_0^{(k_0)}\|.
\]
By inequality~\reff{dB0<=dA0}, we get
\be\label{dx0<=dA0}
\|\tx^{(0)} - x^{(0)}\| \,\le\, C_0D_0 \|\dt \mc{A}_0 \|_{\Omega_0}.
\ee
Note that
$\mc{A}_1 = \mc{T}_{k_0}(x^{(0)})$,
$\widetilde{\mc{A}}_1 = \mc{T}_{k_0}(\tx^{(0)})$,
and $\tomg_1 = \Omega_1$.
Observe that $\widetilde{\mc{A}}_1 - \mc{A}_1$ is a reshaping of $\tx^{(0)} - x^{(0)}$, so
\[
\| \widetilde{\mc{A}}_1 - \mc{A}_1 \|_{\Omega_1} \,=\, \|\tx^{(0)} - x^{(0)}\|.
\]
By \reff{dx0<=dA0}, the above implies that when $\|\dt \mc{A}\|_{\Omega}$ is sufficiently small,
\be\label{dA1<=rho0dA0}
\| \widetilde{\mc{A}}_1 - \mc{A}_1 \|_{\Omega_1} \,\le\, \rho_0 \|\dt \mc{A} \|_{\Omega}
\quad \text{for} \quad \rho_0 \coloneqq C_0D_0.
\ee

Now we view $(\mc{A}_1, \Omega_1)$ as $(\mc{A}_0, \Omega_0)$ and repeat the above argument.
We can show that $\tk_1 = k_1$ (hence $\tomg_2 = \Omega_2$) and \reff{tA-A<=dA} holds for $j=1$.
Similarly, one can show that the conclusion holds for $j = 2 \ddd m-2$.

\noindent (ii) We prove by induction on $m$.
For the base step $m=2$, we apply Algorithm~\ref{alg:recursive} to 
$(\mc{A}_0, \Omega_0) = (\mc{A}, \Omega)$.
Without loss of generality, we assume $k_0 = 2$ in Step~1 of Algorithm~\ref{alg:recursive}.
The matrix $B_0^{(2)}$ formulated as in \reff{Bx=0} is the same as the matrix $B$ in \reff{Bx=0_mx}.
By assumption~(a), $\text{null}(B)$ is one-dimensional.
Recall that $x^{(0)}$ is the unit length vector computed from $B$ in Step~2 of Algorithm~\ref{alg:recursive}.
Since $\Omega$ is row-full, $u_1 = x^{(0)}$.
Similarly, $\tx^{(0)}$ is the unit length vector computed from $\widetilde{B}$ in Step~2 when Algorithm~\ref{alg:recursive} is applied to $(\widetilde{\mc{A}}, \tomg)$ where $\tomg = \Omega$.
Also since $\Omega$ is row-full, $\tu_1 = \tx^{(0)}$.
Let $\dt B \coloneqq \widetilde{B} - B$.
By \reff{dx0<=dA0}, 
\be\label{u1bound}
\|\tu_1 - u_1\| = \|\tx^{(0)} - x^{(0)}\| \le C_0 D_0 \|\dt \mc{A}\|_{\Omega}.
\ee
In Step~5, Algorithm~\ref{alg:recursive} computes $u_2$ (resp., $\tu_2$) 
from $(\mc{A}, \Omega)$ (resp., $(\widetilde{\mc{A}}, \Omega)$) by solving
a linear least squares problem.
Fix an arbitrary $j \in [n_2]$, 
we write 
\[
\Theta_j \,=\, \{ (i_1, j), (i_2, j) \ddd (i_{m_j}, j) \},
\]
where $\Theta_j$ is defined as in \reff{theta_j}.
Denote 
\[
\baray{lcl}
a \,\coloneqq\, \big[ 
\mc{A}_{i_1 j} \,\, \cdots \,\, \mc{A}_{i_{m_j} j}
\big]^T, &&
\ta \,\coloneqq\, \big[ 
\widetilde{\mc{A}}_{i_1 j} \,\,  \cdots \,\,  \widetilde{\mc{A}}_{i_{m_j} j}
\big]^T, \\
w \,\coloneqq\, \big[ 
(u_1)_{i_1} \,\,  \cdots \,\,  (u_1)_{i_{m_j}}
\big]^T, &&
\tw \,\coloneqq\, \big[ 
(\tu_1)_{i_1} \,\,  \cdots \,\,  (\tu_1)_{i_{m_j}}
\big]^T.
\earay
\]
Then the entries $(u_2)_j$ and $(\tu_2)_j$ are, respectively, the minimizers for
\be\label{ls_w}
\min_{\theta \in \re^1} \, \|a - \theta w\|, \qquad
\min_{\tilde{\theta} \in \re^1} \, \|\ta - \tilde{\theta} \tw\|.
\ee
Since $\mc{A}_{ij} \ne 0$ for all $(i,j) \in \Omega$ and $\mc{A} = u_1u_2^T$,
we have $w \ne 0$ and $(u_2)_j \ne 0$.
Note that $a = (u_2)_j w$, so
$
|w^Ta| = |(u_2)_j| \cdot \|w\|^2 > 0.
$
Let $\dt a \coloneqq \ta - a$ and $\dt w \coloneqq \tw - w$.
Since $\dt a$ is a sub-array of $\dt\mc{A}$, we have
\be\label{da<=dA}
\|\dt a\| \,\le\, \|\dt \mc{A}\|_{\Omega}.
\ee
Since $\dt w$ is a subvector of $\tu_1 - u_1$, \reff{u1bound} implies
\be\label{dw<=dA}
\|\dt w\| \le \|\tu_1 - u_1\| \le C_0 D_0 \|\dt \mc{A}\|_{\Omega}.
\ee
The least squares solutions to \reff{ls_w} can be expressed as
\[
(u_2)_j = w^Ta \big/ w^Tw,
\quad (\tu_2)_j = \tw^T\ta \big/ \tw^T\tw.
\]
Observe that
\begin{align*}
(\tu_2)_j - (u_2)_j &= \frac{(w+\dt w)^T(a + \dt a)}{(w + \dt w)^T(w + \dt w)} - \frac{w^Ta}{w^Tw}
= (u_2)_j  \cdot \frac{\tau - \gm}{1 + \gm},
\end{align*}
where
\[
\tau \coloneqq \frac{w^T(\dt a) + (\dt w)^Ta + (\dt w)^T(\dt a)}{w^Ta},
\qquad \gm \coloneqq \frac{2w^T(\dt w) + (\dt w)^T (\dt w)}{w^Tw}.
\]
By the inequality $w^T(\dt w) \ge - \|w\| \|\dt w\|$, we get
\[
\gm \ge \frac{ -2 \|w\| \|\dt w\| + \|\dt w\|^2 }{\|w\|^2}
= -2\frac{\|\dt w\|}{\|w\|} + \Big(\frac{\|\dt w\|}{\|w\|}\Big)^2.
\]
When $\|\dt \mc{A}\|_{\Omega}$ is sufficiently small, 
we have $1+\gm \ge \frac{1}{2}$ by \reff{dw<=dA}, so
\[
\frac{|(\tu_2)_j - (u_2)_j|}{|(u_2)_j|} = \Big| \frac{\tau - \gm}{1+\gm} \Big| 
\le 2|\tau - \gm| \le 2(|\tau| + |\gm|).
\]
Note that 
\[
|\tau| \le \frac{\|w\|\|\dt a\| + \|\dt w\| \|a\| + \|\dt w\| \|\dt a\|}{|w^Ta|},
\qquad |\gm| \le \frac{2 \|w\| \|\dt w\| + \|\dt w\|^2}{\|w\|^2}.
\]
By \reff{da<=dA} and \reff{dw<=dA}, there exists a constant $L_j > 0$ such that
\[
|(\tu_2)_j - (u_2)_j| \,\le\, L_j \|\dt \mc{A}\|_{\Omega}.
\]
Let $K \coloneqq \sqrt{n_2} \max_j L_j$, then
\be\label{u2bound}
\|\tu_2 - u_2\| = \Big( \sum_{j = 1}^{n_2} |(\tu_2)_j - (u_2)_j|^2 \Big)^{1/2} \le K \|\dt \mc{A}\|_{\Omega}.
\ee
Hence, \reff{tu_i-u_i<=dA} holds with $C_1 \coloneqq \max\{ C_0D_0, K \}$.
If $\|\dt \mc{A}\|_{\Omega} \le \frac{1}{K}$, then \reff{u2bound} implies
\[
\|\tu_2\| \le \| \tu_2 - u_2 \| + \|u_2\| \le K \|\dt \mc{A}\|_{\Omega} + \|u_2\| \le 1 +  \|u_2\|.
\]
Combining \reff{u1bound} and \reff{u2bound}, we get
\be\label{base}
\baray{rcl}
\|\tu_1\otimes \tu_2 - u_1 \otimes u_2\|
&=& \|\tu_1 \otimes \tu_2 - u_1 \otimes \tu_2 + u_1 \otimes \tu_2 -u_1\otimes u_2\| \\
&\le& \|\tu_1-u_1\|\,\|\tu_2\|+\|u_1\|\,\|\tu_2-u_2\| \\
&\le& C_2\|\dt\mc{A}\|_\Omega
\earay
\ee
for the constant $C_2 \coloneqq C_0D_0(1+\|u_2\|)+ K\|u_1\|$, depending on $(u_1, u_2)$.

For the inductive step, assume the conclusion holds for the order $m-1$. 
We apply Algorithm~\ref{alg:recursive} to the $(m-1)$st order p.o.t.'s 
$(\mc{A}_1, \Omega_1)$ and $(\widetilde{\mc{A}}_1, \tomg_1)$.
By item~(i),
$\tomg_1 = \Omega_1$, $\widetilde{\mc{A}}_1$ and $\mc{A}_1$ have the same dimension.
Let
$\dt\mc{A}_1 \coloneqq \widetilde{\mc{A}}_1 - \mc{A}_1$.
Without loss of generality, we assume $k_0 = m$.
By the inductive hypothesis, there exist constants $C_1' > 0$, $C_2'>0$, 
depending on $(u_1 \ddd u_{m-1})$, 
such that
\[
\begin{gathered}
\| \tu_t - u_t \| \,\le\, C_1' \|\dt \mc{A}_1\|_{\Omega_1} \quad \text{for} \quad t = 1 \ddd m-1,
\\
\| \tu_1 \otimes \cdots \otimes \tu_{m-1} - u_1 \otimes \cdots \otimes u_{m-1} \| 
\,\le\, C_2' \|\dt \mc{A}_1\|_{\Omega_1}.
\end{gathered}
\]
By the inequality~\reff{dA1<=rho0dA0}, the above becomes
\[
\begin{gathered}
\| \tu_t - u_t \| \,\le\, C_1' \rho_0 \|\dt \mc{A}\|_{\Omega} \quad \text{for} \quad t = 1 \ddd m-1,
\\
\| \tu_1 \otimes \cdots \otimes \tu_{m-1} - u_1 \otimes \cdots \otimes u_{m-1} \| 
\,\le\, C_2' \rho_0  \|\dt \mc{A}\|_{\Omega}.
\end{gathered}
\]
In Step~5, Algorithm~\ref{alg:recursive} produces $u_{m}$ and $\tu_{m}$
by solving the linear least squares problems respectively (note that $\tomg = \Omega$)
\[
\min_{u} \, \| \mc{A} - u_1 \otimes \cdots \otimes u_{m-1} \otimes u \|_{\Omega}, \qquad
\min_{\tu} \, \| \widetilde{\mc{A}} - \tu_1 \otimes \cdots \otimes \tu_{m-1} \otimes \tu \|_{\tomg}.
\]
Similar to \reff{u2bound}, we can show that there exists $K'>0$ such that
\[
\|\tu_{m} - u_{m}\| \,\le\, K' \|\dt \mc{A}\|_{\Omega}.
\]
Hence, \reff{tu_i-u_i<=dA} holds with $C_1 \coloneqq \max\{ C_1'\rho_0, K' \}$.
Observe that
\begin{align*}
\tu_1 \otimes \cdots \otimes \tu_m - u_1 \otimes \cdots \otimes u_m 
=\,& (\tu_1 \otimes \cdots \otimes \tu_{m-1} - u_1 \otimes \cdots \otimes u_{m-1}) \otimes \tu_m \\
& + u_1 \otimes \cdots \otimes u_{m-1} \otimes (\tu_m - u_m).
\end{align*}
Similar to \reff{base}, we can show that there exists $C_2>0$, 
depending on $(u_1 \ddd u_m)$, such that \reff{tu-u<=dA} holds,
when $\|\dt\mc{A}\|_{\Omega}$ is sufficiently small.
\end{proof}

\section{Numerical Experiments}\label{sc:num_exp}

This section provides numerical experiments for solving rank one tensor completion problems with either exact or noisy observations.
All computations are implemented in MATLAB R2023b on a MacBook Pro equipped with an Apple M1 Pro processor and 16 GB of RAM. 
For neatness, only four decimal digits are displayed for computational results.

\begin{example}
Consider the p.o.t. $\mc{A} \in \re^{3 \times 3 \times 5 \times 9}$ with observed entries:
\[
\baray{cccccc}
\mc{A}_{1     2     4     4} = \frac{1}{2}, &
\mc{A}_{1     2     4     6} = \frac{1}{2}, &
\mc{A}_{1     3     2     4} = 2, &
\mc{A}_{1     3     2     7} = 2, &
\mc{A}_{1     3     4     3} = 1, &
\mc{A}_{2     1     2     2} = \frac{4}{3}, \\
\mc{A}_{2     1     2     3} = \frac{4}{3}, &
\mc{A}_{2     1     5     5} = 2, &
\mc{A}_{2     1     5     9} = 2, &
\mc{A}_{2     2     1     1} = 1, &
\mc{A}_{2     2     1     8} = 1, &
\mc{A}_{2     2     3     2} = 2, \\
\mc{A}_{2     2     3     6} = 2, &
\mc{A}_{3     3     1     7} = 3, &
\mc{A}_{3     3     1     9} = 3, &
\mc{A}_{3     3     5     1} = 9, &
\mc{A}_{3     3     5     5} = 9.
\earay
\]
Algorithm~\ref{alg:recursive} gives the exact rank one completion with decomposing vectors
\[
\baray{ccc}
u_1 = \frac{1}{\sqrt{14}} (1,2,3),
&
u_2 = \sqrt{\frac{14}{421}} (2,3,6),
&
u_3 = \sqrt{\frac{421}{3817}} (1,2,2,1,3),
\earay
\]
and 
$
u_4 = \frac{\sqrt{3817}}{6} (1,1,1,1,1,1,1,1,1).
$

\end{example}

\begin{example}
Consider the p.o.t. $\mc{A} \in \re^{3 \times 5 \times 7}$ with observed entries:
\[
\baray{l}
\mc{A}_{121} = 1.0753, 
\,\, \mc{A}_{126} = 1.0789, 
\,\, \mc{A}_{134} = 0.9170,
\,\, \mc{A}_{137} = 0.9078, 
\,\, \mc{A}_{212} = 0.9340, \\
\mc{A}_{213} = 1.0756, 
\,\, \mc{A}_{224} = 0.9197,
\,\, \mc{A}_{225} = 0.9842,
\,\, \mc{A}_{242} = 1.0916, 
\,\, \mc{A}_{251} = 1.0066, \\
\mc{A}_{257} = 1.0384,
\,\, \mc{A}_{333} = 0.9631, 
\,\, \mc{A}_{336} = 1.0373.
\earay
\]
This p.o.t. $\mc{A}$ is perturbed from the rank one tensor $\mc{A}^*$ whose entries are all ones.
Let $\dt\mc{A} \coloneqq \mc{A} - \mc{A}^*$, then
$
\|\dt\mc{A}\|_{\Omega}= 0.2382.
$
Algorithm~\ref{alg:recursive} gives the decomposing vectors:
\[
\baray{l}
u_1 = (0.5722,
0.4697,
0.6723),
\quad u_2 = (0.8721,
0.6666,
0.5456,
1.0192,
0.7603), \\
u_3 = (2.8188,
2.2802,
2.6259,
2.9373,
3.1433,
2.8283,
2.9079).
\earay
\]
The errors for the completion $u_1 \otimes u_2  \otimes u_3$ are
\[
\begin{gathered}
\|\mc{A} - u_1 \otimes u_2  \otimes u_3 \|_{\Omega} = 1.20 \cdot 10^{-15}, 
\,\quad \|\mc{A}^* - u_1 \otimes u_2  \otimes u_3 \|_{\Omega} = 0.2382, \\
\frac{\|\mc{A} - u_1 \otimes u_2  \otimes u_3 \|_{\Omega}}{\|\dt\mc{A}\|_{\Omega}} = 5.02 \cdot 10^{-15},
\quad
\frac{\|\mc{A}^* - u_1 \otimes u_2  \otimes u_3 \|_{\Omega}}{\|\dt\mc{A}\|_{\Omega}} = 1.0000.
\end{gathered}
\]
Hence, the completion error is comparable to the perturbation magnitude.
\end{example}

\begin{example}
Consider the p.o.t. $\mc{A} \in \re^{2 \times 3 \times 4 \times 5 \times 6}$ with observed entries:
\[
\baray{cccc}
\mc{A}_{11434} = 0.9993, 
& \mc{A}_{11435} = 0.9993, 
& \mc{A}_{11453} = 1.0006, 
& \mc{A}_{11455} = 0.9998, \\
\mc{A}_{12224} = 0.9993, 
& \mc{A}_{12226} = 1.0009, 
& \mc{A}_{12414} = 0.9997, 
& \mc{A}_{12421} = 1.0005, \\
\mc{A}_{12423} = 1.0005, 
& \mc{A}_{13134} = 1.0008, 
& \mc{A}_{13136} = 1.0002, 
& \mc{A}_{13143} = 1.0005, \\
\mc{A}_{13145} = 0.9997, 
& \mc{A}_{13232} = 0.9995, 
& \mc{A}_{13233} = 1.0008, 
& \mc{A}_{13241} = 0.9999, \\
\mc{A}_{13244} = 1.0009, 
& \mc{A}_{21211} = 1.0003, 
& \mc{A}_{21216} = 1.0002, 
& \mc{A}_{21251} = 0.9992, \\
\mc{A}_{21255} = 1.0009, 
& \mc{A}_{21334} = 0.9999, 
& \mc{A}_{21336} = 1.0002, 
& \mc{A}_{21344} = 0.9998, \\
\mc{A}_{21345} = 0.9995, 
& \mc{A}_{23133} = 1.0008, 
& \mc{A}_{23135} = 1.0001, 
& \mc{A}_{23151} = 0.9990, \\
\mc{A}_{23152} = 1.0002, 
& \mc{A}_{23334} = 0.9997, 
& \mc{A}_{23336} = 1.0001, 
& \mc{A}_{23351} = 1.0008, \\
\mc{A}_{23353} = 0.9997.
\earay
\]
This p.o.t. $\mc{A}$ is perturbed from the rank one tensor $\mc{A}^*$ whose entries are all ones.
Let $\dt\mc{A} \coloneqq \mc{A} - \mc{A}^*$, then
$
\|\dt\mc{A}\|_{\Omega}=3.2 \cdot 10^{-3}.
$
Algorithm~\ref{alg:recursive} gives the decomposing vectors:
\[
\baray{l}
u_1 = (0.7069,
0.7073),
\quad u_2 = (0.6323,
0.6320,
0.6329), \\
u_3 = (0.7451,
0.7452,
0.7450,
0.7458), \\
u_4 = (0.7276,
0.7283,
0.7274,
0.7273,
0.7276), \\
u_5 = (4.1220,
4.1228,
4.1240,
4.1236,
4.1229,
4.1247).
\earay
\]
The errors for the completion $u_1 \otimes \cdots  \otimes u_5$ are
\[
\begin{gathered}
\|\mc{A} - u_1 \otimes \cdots \otimes u_5 \|_{\Omega} = 3.3  \cdot 10^{-3},
\quad \|\mc{A}^* - u_1 \otimes \cdots \otimes u_5 \|_{\Omega} = 2.2  \cdot 10^{-3}, \\
\frac{\|\mc{A} - u_1 \otimes \cdots \otimes u_5 \|_{\Omega}}{\|\dt\mc{A}\|_{\Omega}} = 1.0277,
\quad\qquad \,\,\frac{\|\mc{A}^* - u_1 \otimes \cdots \otimes u_5 \|_{\Omega}}{\|\dt\mc{A}\|_{\Omega}} = 0.6853.
\end{gathered}
\]
Hence, the completion error is comparable to the perturbation magnitude.
\end{example}

In the following, 
we give a method to randomly generate the observation set $\Omega$ for which $(\mc{A}, \Omega)$ is rank one determinable.
First, let $\Phi_1 \coloneqq [n_1]$.
For $t = 1 \ddd m-1$, we construct 
$
\Phi_{t+1} \subseteq [n_1] \times \cdots \times [n_{t+1}]
$
recursively as follows.
We write
\be
\Phi_t \,\coloneqq\, \{  \phi_1 \ddd \phi_{M} \},
\ee
where $M \coloneqq |\Phi_t|$ is the cardinality of $\Phi_t$.
\bit
\item For the case $M \ge n_{t+1}$,
we choose a random permutation $( i_1 \ddd i_M )$ of $(1 \ddd M)$ and 
a random permutation $(j_1 \ddd j_{n_{t+1}})$ of $(1 \ddd n_{t+1})$.
For $l = n_{t+1}+1 \ddd M$, we select $j_l \in [n_{t+1}]$ randomly.
Let
\be
\Phi_{t+1} \,\coloneqq\, \{(\phi_{i_1}, j_1)\} \cup 
\bigcup_{l=2}^{M} \big\{ (\phi_{i_l}, j_{l-1}), (\phi_{i_l}, j_l) \big\}.
\ee

\item For the case $M < n_{t+1}$,
we choose a random permutation $( i_1 \ddd i_M )$ of $(1 \ddd M)$ and 
a random permutation $( j_1 \ddd j_{n_{t+1}} )$ of $(1 \ddd n_{t+1})$.
For $l = M+1 \ddd n_{t+1}$, we select $i_l \in [M]$ randomly.
Let
\be
\Phi_{t+1} \,\coloneqq\, \{(\phi_{i_1}, j_1)\} \cup 
\bigcup_{l=2}^{n_{t+1}} \big\{ (\phi_{i_l}, j_{l-1}), (\phi_{i_l}, j_l) \big\}.
\ee
\eit
Let $N \coloneqq \max\{ M, n_{t+1} \}$. 
Then $\Phi_{t+1}$ represents the edge set of the bipartite graph:
\be\label{con_path:phi-j}
\begin{tikzcd}[column sep=2.2em,row sep=2.2em]
|[draw,circle,minimum size=6mm,inner sep=0pt]| {\phi_{i_1}}
& |[draw,circle,minimum size=6mm,inner sep=0pt]| {\phi_{i_2}}
& \cdots\cdots
& |[draw,circle,minimum size=6mm,inner sep=0pt]| {\phi_{i_N}} \\
|[draw,circle,minimum size=6mm,inner sep=0pt]| {j_1}
& |[draw,circle,minimum size=6mm,inner sep=0pt]| {j_2}
& \cdots\cdots
& |[draw,circle,minimum size=6mm,inner sep=0pt]| {j_N}
\arrow[no head, from=1-1, to=2-1]
\arrow[no head, from=1-2, to=2-2]
\arrow[dashed, no head, from=1-3, to=2-3]
\arrow[no head, from=1-4, to=2-4]
\arrow[no head, from=2-1, to=1-2]
\arrow[dashed, no head, from=2-2, to=1-3]
\arrow[dashed, no head, from=2-3, to=1-4]
\end{tikzcd}
\ee
As shown above, the bipartite graph $G(\Phi_t, [n_{t+1}], \Phi_{t+1})$ is connected.
By Theorem~\ref{thm:connect<->rank-1}(ii), the p.o.m. $(A, \Phi_{t+1})$ is rank one extractable for any rank one matrix $A$ without zero entries.
Finally, let
\be
\Omega \coloneqq \Phi_m \subseteq [n_1] \times \cdots \times [n_m].
\ee

We randomly generate vectors $u_1 \in \re^{n_1} \ddd u_m \in \re^{n_m}$ whose entries obey the normal distribution $\mc{N}(0,1)$,
and let
$
\mc{A} = u_1 \otimes \cdots \otimes u_m.
$
By the above construction of $\Omega$, $(\mc{A}, \Omega)$ is rank one determinable.
Let $\widetilde{\mc{A}} \coloneqq \mc{A} + \dt\mc{A}$ be a perturbation of $\mc{A}$ on $\Omega$.
For every $(i_1 \ddd i_m) \in \Omega$, we generate the noise 
\[
\dt\mc{A}_{i_1 \ddd i_m} \,=\, \vareps \cdot (\mt{2*rand-1}) \cdot \mc{A}_{i_1 \ddd i_m},
\]
where $\vareps$ is the magnitude of noise (e.g., $\vareps = 10^{-2}$).
Let $\tu_1 \otimes \cdots \otimes \tu_m$ be the rank one completion produced by Algorithm~\ref{alg:recursive} from $(\widetilde{\mc{A}}, \Omega)$.
We evaluate the completion quality by the errors
\[
\mt{err\_ab} = \|u_1 \otimes \cdots \otimes u_m - \tu_1 \otimes \cdots \otimes \tu_m \|_{\Omega},
\qquad \mt{err\_rt} = \frac{\mt{err\_ab}}{\|\dt\mc{A}\|_{\Omega}}.
\]
The difference between decomposing vectors is measured by the angle $\theta$:
\[
\theta_k = \cos^{-1} \Big( \frac{u_k^T\tu_k}{\|u_k\| \|\tu_k\|} \Big), \
k = 1 \ddd m,
\qquad \sin\theta = \frac{1}{m} \sum_{k=1}^m |\sin \theta_k|.
\]
For the p.o.t. $(\mc{A}, \Omega)$, the density of observed entries is
\[
\mt{den} \,=\, |\Omega| / (n_1 \cdots n_m).
\]

We report computational results on randomly generated p.o.t.'s as above.
For each case of $(n_1,\ldots,n_m)$, 
we generate $50$ independent instances of $(\widetilde{\mc{A}},\Omega)$.
Algorithm~\ref{alg:recursive} is applied to obtain a rank one completion
$\tu_1 \otimes \cdots \otimes \tu_m$. 
We report in Table~\ref{tab:m=3,4,5,6} the average values (over 50 instances)
of $\mt{den}$,  $\mt{err\_ab}$, $\mt{err\_rt}$, 
$\sin\theta$, and the runtime (in seconds).
These results indicate that our method is robust under noise.
The values of $\mt{err\_rt}$ are close to 1 and $\sin\theta$ remains small,
suggesting that the completion error is comparable to the perturbation magnitude.
This aligns with Theorem~\ref{thm:tensor_perturb}.
The runtime shows that our method is efficient for large-scale problems.

\begin{table}[htb]
\caption{Computational results for noisy rank one tensor completion.}
\label{tab:m=3,4,5,6}
\centering
\begin{tabular}{ccccccc}
\toprule
$(n_1 \ddd n_m)$ & $\mt{den}$  & $\vareps$ & $\mt{err\_ab}$ & $\mt{err\_rt}$ & $\sin\theta$ & $\mt{time}$ \\
\midrule
\multirow{2}{*}{$(700,800,900)$} &
\multirow{2}{*}{$6.3 \cdot 10^{-6}$} &
$10^{-2}$ & 0.0366 & 1.01 & 0.026 & 10.48 \\
& & $10^{-3}$ & 0.0036 & 1.00 & 0.002 & 10.19 \\
\midrule
\multirow{2}{*}{$(800,900,1000)$} &
\multirow{2}{*}{$5.0 \cdot 10^{-6}$} &
$10^{-2}$ & 0.0371 & 1.00 & 0.026 & 15.34 \\
& & $10^{-3}$ & 0.0038 & 1.00 & 0.002 & 14.74 \\
\midrule
\multirow{2}{*}{$(250,300,350,400)$} &
\multirow{2}{*}{$ 2.3 \cdot 10^{-7}$} &
$10^{-2}$ & 0.0289 & 1.12 & 0.031 & 12.99 \\
& & $10^{-3}$ & 0.0026 & 1.04 & 0.002 & 13.24 \\
\midrule
\multirow{2}{*}{$(300,350,400,450)$} &
\multirow{2}{*}{$ 1.5 \cdot 10^{-7}$} &
$10^{-2}$ & 0.0283 & 1.07 & 0.029 & 22.13 \\
& & $10^{-3}$ & 0.0028 & 1.03 & 0.002 & 22.08 \\
\midrule
\multirow{2}{*}{$(80,100,120,140,160)$} &
\multirow{2}{*}{$ 7.3 \cdot 10^{-8}$} &
$10^{-2}$ & 0.0225 & 1.21 & 0.029 & 9.64 \\
& & $10^{-3}$ & 0.0021 & 1.11 & 0.002 & 9.63 \\
\midrule
\multirow{2}{*}{$(100,120,140,160,180)$} &
\multirow{2}{*}{$ 3.9 \cdot 10^{-8}$} &
$10^{-2}$ & 0.0234 & 1.19 & 0.031 & 18.99 \\
& & $10^{-3}$ & 0.0024 & 1.10 & 0.002 & 18.70 \\
\midrule
\multirow{2}{*}{$(30, 35, 40, 45, 50, 55)$} &
\multirow{2}{*}{$ 2.1 \cdot 10^{-7}$} &
$10^{-2}$ & 0.0174 & 1.12 & 0.019 & 10.95 \\
& & $10^{-3}$ & 0.0019 & 1.13 & 0.002 & 10.98 \\
\midrule
\multirow{2}{*}{$(35, 40, 45, 50, 55, 60)$} &
\multirow{2}{*}{$ 1.2 \cdot 10^{-7}$} &
$10^{-2}$ & 0.0185 & 1.10 & 0.017 & 18.17 \\
& & $10^{-3}$ & 0.0019 & 1.14 & 0.002 & 18.48 \\
\bottomrule
\end{tabular}
\end{table}

We further compare Algorithm~\ref{alg:recursive} with the direct nonlinear least squares approach,
which can be formulated as
\be\label{nls}
\min_{\tu_1 \ddd \tu_m} \quad 
\sum_{(i_1 \ddd i_m) \in \Omega}
\left( \widetilde{\mc A}_{i_1 \cdots i_m} - (\tu_1)_{i_1} \cdots (\tu_m)_{i_m} \right)^2.
\ee
For convenience in implementation, 
we use the MATLAB function $\mt{lsqnonlin}$ with a random initialization to solve \reff{nls}.
For each case of $(n_1 \ddd n_m)$, 
we generate 20 independent instances of $(\widetilde{\mc{A}},\Omega)$ as above.
We report in Table~\ref{tab:compare_nls} the computational results,
which indicate that our method is not only more accurate 
but also more efficient than the classical nonlinear least squares approach.

\begin{table}[htb]
\caption{Comparison with nonlinear least squares.}
\label{tab:compare_nls}
\centering
\begin{tabular}{cccrrr}
\toprule
\multirow{2}{*}{$(n_1 \ddd n_m)$}
& \multirow{2}{*}{$\vareps$} 
& \multicolumn{2}{c}{Algorithm~\ref{alg:recursive}}
& \multicolumn{2}{c}{Nonlinear L.S.} \\
\cmidrule(lr){3-4} \cmidrule(lr){5-6}
&& $\mt{err\_rt}$ & $\mt{time}$
& $\mt{err\_rt}$ & $\mt{time}$ \\
\midrule
$(400, 500, 600)$ & $10^{-2}$ & 1.02 & 1.91 & 62.87 & 23.24 \\
\midrule
$(500, 600, 700)$ & $10^{-2}$ & 1.01 & 3.53 & 68.85 & 36.83 \\
\midrule
$(600, 700, 800)$ & $10^{-2}$ & 1.01 & 5.65 & 68.41 & 56.01 \\
\midrule
$(150, 200, 250, 300)$ & $10^{-2}$ & 1.09 & 2.64 & 56.88 & 16.72 \\
\midrule
$(200, 250, 300, 350)$ & $10^{-3}$ & 1.06 & 5.68 & 743.97 & 25.31 \\
\midrule
$(250, 300, 350, 400)$ & $10^{-2}$ & 1.06 & 10.65 & 66.33 & 43.01 \\
\midrule
$(40, 60, 80, 100, 120)$ & $10^{-3}$ & 1.27 & 1.15 & 613.69 & 4.64 \\
\midrule
$(60, 80, 100, 120, 140)$ & $10^{-3}$ & 1.15 & 3.31 & 705.51 & 7.81 \\
\bottomrule
\end{tabular}
\end{table}

\section{Conclusions}

This paper studies the rank one completion problem for tensors of arbitrary orders.
We introduce rank one extractability for partially observed matrices 
and study its properties.
We then define rank one determinable tensors recursively through flattenings.
Based on this, a recursive algorithm is proposed for rank one tensor completion. 
Under some assumptions, 
we prove that this algorithm produces a unique rank one completion.
In the presence of noise, we show that this algorithm is robust.
That is, when the noise is sufficiently small, 
the rank one completion produced by the algorithm is close to the exact one.
Numerical experiments illustrate the efficiency and accuracy of our method.

\smallskip
\noindent \textbf{Acknowledgements.}
Jiawang Nie and Linghao Zhang are partially supported by 
the NSF grant DMS2513254 
and the AFOSR grant FA9550-25-1-0298.

\appendix

\section{Perturbation of matrix nullspace}

\begin{theorem}\label{thm:mx_null_perturb}
Let $B \in \re^{\ell \times n}$ be a matrix with $\rank (B) = n-1$.
Let $v$ be a solution to $Bx = 0$ with $\|v\|=1$.
Let $\widetilde{B} = B + \dt B$ be a perturbation of $B$.

\bnum
\item[(i)] If $\ell \ge n$, let $\tv$ be the right singular vector of $\widetilde{B}$ corresponding to the minimum singular value $\tsig_n$ such that $\|\tv\|=1$ and $v^T\tv \ge 0$.

\item[(ii)] If $\ell<n$, let $\tv$ be a solution to $\widetilde{B}x = 0$ with $\|\tv\|=1$ and $v^T\tv \ge 0$.
\enum
Let $\dt v \coloneqq \tv - v$.
Then there exists a constant $C_0>0$, depending on $B$, such that
\be\label{dv<=C0dB}
\|\dt v\| \,\le\, C_0 \|\dt B\|,
\ee
when $\|\dt B\|$ is sufficiently small.
\end{theorem}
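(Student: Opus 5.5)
The plan is a direct ``$\sin\theta$''-type argument that uses only the smallest nonzero singular value of $B$; no eigenvector perturbation machinery is needed. Let $s \coloneqq \sig_{n-1}(B)$. Since $\rank(B) = n-1$, we have $s > 0$ and $\text{null}(B) = \Span\{v\}$. The key fact is the following: for every $w \in \re^n$ with $v^T w = 0$, the vector $w$ lies in the orthogonal complement of $\text{null}(B)$, which is the row space of $B$. Hence
\[
\|Bw\| \,\ge\, s\,\|w\|.
\]
I will decompose the perturbed unit vector as $\tv = c\,v + w$, where $c = v^T\tv \ge 0$ and $w \perp v$. Since $Bv = 0$, this gives $B\tv = Bw$.

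The next step is to bound $\|B\tv\|$ by a multiple of $\|\dt B\|$, using the identity $B\tv = \widetilde{B}\tv - \dt B\,\tv$.
\begin{itemize}
\item Case (i), $\ell \ge n$. By the min--max characterization, $\|\widetilde{B}\tv\| = \tsig_n = \min_{\|x\|=1}\|\widetilde{B}x\| \le \|\widetilde{B}v\| = \|\dt B\, v\| \le \|\dt B\|$. Therefore $\|B\tv\| \le 2\|\dt B\|$.
\item Case (ii), $\ell < n$. Here $\widetilde{B}\tv = 0$, so $\|B\tv\| \le \|\dt B\|$. A unit vector $\tv$ with $\widetilde{B}\tv = 0$ exists automatically because $\ell < n$. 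Replacing $\tv$ by $-\tv$ if necessary gives $v^T\tv \ge 0$.
\end{itemize}
Combining either case with the key fact yields $\|w\| \le 2\|\dt B\|/s$.

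Finally I will convert the bound on $\|w\|$ into a bound on $\|\tv - v\|$. Since $\|\tv\| = 1$ and $c \ge 0$, we have $c = \sqrt{1 - \|w\|^2}$, so $0 \le 1 - c \le 1 - c^2 = \|w\|^2 \le \|w\|$. Consequently
\[
\|\dt v\|^2 = (1-c)^2 + \|w\|^2 \le 2\|w\|^2,
\]
and therefore $\|\dt v\| \le \sqrt{2}\,\|w\| \le \frac{2\sqrt{2}}{\sig_{n-1}(B)}\,\|\dt B\|$. This gives \reff{dv<=C0dB} with $C_0 \coloneqq 2\sqrt{2}/\sig_{n-1}(B)$, which depends only on $B$.

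I expect no step to be a real obstacle. The only delicate points are, in case (i), using the variational characterization of $\tsig_n$ rather than perturbation theory for the singular vector $\tv$, and fixing the sign convention so that $c \ge 0$. The argument does not even require $\|\dt B\|$ to be small. Smallness is only needed if one additionally wants $\tv$ to be well defined, meaning $\tsig_n$ is simple or $\text{null}(\widetilde{B})$ is one-dimensional. That follows from Weyl's inequality $\tsig_{n-1} \ge s - \|\dt B\| > \|\dt B\| \ge \tsig_n$ once $\|\dt B\| < s/2$.
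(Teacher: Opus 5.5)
Your proof is correct and follows essentially the paper's route. In both, the component of $\tv$ orthogonal to $v$ (the paper's $\hat z$ in the coordinates $V^T\dt v$) is bounded by $\|B\tv\| = \|B\,\dt v\|/\sig_{n-1}$, and the component along $v$ is then bounded by the square of that using $\|\tv\|=1$ and $v^T\tv \ge 0$.

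Your coordinate-free version is slightly sharper than the paper's. Bounding $\|\dt B\,\tv\| \le \|\dt B\|$ directly, instead of splitting it as $\dt B\,v + \dt B\,\dt v$, gives $C_0 = 2\sqrt{2}/\sig_{n-1}$ in both cases. Observing that $\|w\| \le 1$ holds automatically removes the smallness condition on $\|\dt B\|$ that the paper imposes.
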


\begin{proof}
Let $B = U\Sig V^T$ be the SVD with 
\[
\Sig = 
\bbm 
\Sig_1 & 0 \\
0 & 0
\ebm
\in \re^{\ell \times n},
\quad \Sig_1 = \diag(\sig_1 \ddd \sig_{n-1}),
\]
where $\sig_1 \ge \cdots \ge \sig_{n-1} > \sig_n = 0$ are singular values of $B$, and
\[
U = \bbm u_1 & u_2 & \cdots & u_{\ell}  \ebm \in \re^{\ell \times \ell},
\quad
V = \bbm v_1 & v_2 & \cdots & v_n  \ebm \in \re^{n \times n}
\]
are orthogonal matrices.
Since $\rank(B) = n-1$, the nullspace of $B$ is one-dimensional,
so the vector $v_n$ is unique up to sign.
We can generally assume $v_n = v$.

\noindent (i) For the case that $\ell \ge n$, 
up to orthogonal transformation, we can assume $B=\Sig$ and $v=e_n$.
Let $\tu$ be the unit length left singular vector of $\widetilde{B}$, corresponding to its minimum singular value $\tsig_n$,
i.e., $\widetilde{B} \tv = \tsig_n \tu$.
Note that
\be\label{tBtvn}
\widetilde{B} \tv = (B +\dt B) (v + \dt v) = Bv + B \cdot \dt v + \dt B \cdot v + \dt B \cdot \dt v.
\ee
Let $y \coloneqq B\cdot \dt v$.
Since $Bv = 0$ and $\widetilde{B} \tv = \tsig_n \tu$, the above implies
\[
y = \tsig_n \tu - \dt B \cdot v - \dt B \cdot \dt v.
\]
Write
$
y = 
[ y_1 \,\, y_2 \,\, \cdots \,\, y_{\ell} ]^T.
$
Since $\|\tu\| = \|v\| = \|\tv\| = 1$ and $\|\dt v\| = \|\tv - v\| \le 2$, 
\[
\|y\| \le \tsig_n \|\tu\| + \|\dt B\| \|v\| + \|\dt B\| \|\tv - v\| \le \tsig_n + 3\|\dt B\|.  
\]
Note $\tsig_n$ (resp., $\sig_n$) is the minimum singular value of $\widetilde{B}$ (resp., $B$),
so
\begin{align*}
\tsig_n &= \min_{\|x\|=1} \|\widetilde{B}x\| \le \min_{\|x\|=1}( \|Bx\| + \|\dt B \cdot x\| ) \\
&\le \min_{\|x\|=1} \|Bx\| + \|\dt B\| = \sig_n + \|\dt B\|.
\end{align*}
Since $\rank(B) = n-1$, we have $\sig_n = 0$.
This implies
$
\tsig_n \le \|\dt B\|
$,
so $\|y\| \le 4\|\dt B\|$.
Denote
$[ z_1 \,\, z_2 \,\, \cdots \,\, z_n ] = \dt v^T$.
Since
$
y = B\cdot \dt v = \Sig \cdot \dt v,
$
we get $z_i = y_i / \sig_i$ for $i \in [n-1]$.
Let 
$
\hat{z} = [z_1  \,\, \cdots \,\, z_{n-1} ]^T,
$
then
\[
\|\hat{z}\|^2 = \sum_{i=1}^{n-1} \frac{y_i^2}{\sig_i^2} 
\le \frac{1}{\sig_{n-1}^2} \sum_{i=1}^{n-1} y_i^2
\le \frac{\|y\|^2}{\sig_{n-1}^2}.
\]
Since
$
\tv = v + \dt v = e_n + \dt v = [ z_1  \,\, \cdots \,\, z_{n-1} \,\, 1 + z_n ]^T$,
we have
\[
1 = \|\tv\|^2 = \|\hat{z}\|^2 + (1+z_n)^2,
\quad 1+z_n = \tv^Te_n = \tv^Tv \ge 0.
\]
Hence, $z_n \le 0$ and
\be\label{z_n<=||zhat||^2}
z_n = \sqrt{ 1 - \|\hat{z}\|^2 } - 1 = -\|\hat{z}\|^2 \Big/ \big(\sqrt{ 1 - \|\hat{z}\|^2 } + 1\big).
\ee
This implies $|z_n| \le \|\hat{z}\|^2$, so
\[
\|\dt v\|^2 = \|\hat{z}\|^2 + z_n^2 \le \|\hat{z}\|^2 + \|\hat{z}\|^4 \le
\frac{\|y\|^2}{\sig_{n-1}^2} \Big( 1 + \frac{\|y\|^2}{\sig_{n-1}^2} \Big).
\]
Recall that $\|y\| \le 4\|\dt B\|$.
If $\|\dt B\| \le \frac{\sig_{n-1}}{4}$, then
\[
\|y\| \,\le\, \sig_{n-1} 
\quad \implies \quad
1 + \|y\|^2 \big/ \sig_{n-1}^2 \,\le\, 2,
\]
so
$\|\dt v\| \le \frac{4\sqrt{2}}{\sig_{n-1}} \|\dt B\|$.
Hence, \reff{dv<=C0dB} holds for 
$C_0 =  \frac{4\sqrt{2}}{\sig_{n-1}}$ 
if $\|\dt B\| \le \frac{\sig_{n-1}}{4}$.

\noindent (ii) For the case that $\ell < n$, we have $\ell = n-1$ since $\rank(B) = n-1$.
In the SVD $B = U\Sig V^T$, we have 
$
\Sig = 
[ \Sig_1 \,\, 0 ] \in \re^{(n-1) \times n}.
$
Since $\widetilde{B}\tv = 0$ with $\|\tv\|=1$, \reff{tBtvn} gives
\[
U\Sig V^T \dt v = B \cdot \dt v = - \dt B \cdot v - \dt B \cdot \dt v.
\]
Multiplying $U^T$ on both sides, we have
\[
\Sig V^T  \dt v = - U^T  \dt B \cdot v - U^T \dt B \cdot \dt v.
\]
Let 
$
y = [ y_1 \,\, y_2 \,\, \cdots \,\, y_{n-1} ]^T = \Sig V^T \dt v.
$
Since $U$ is orthogonal and $\|v\| = \|\tv\| = 1$,
\[
\|y\| \le \|U^T\| \|\dt B\| \|v\| + \|U^T\| \|\dt B\| \|\tv - v\| \le 3\|\dt B\|.
\]
Let $z = [ z_1 \,\, z_2 \,\, \cdots \,\, z_n ]^T = V^T  \dt v$
and
$
\hat{z} = [ z_1 \,\, \cdots \,\, z_{n-1} ]^T,
$
then $y = \Sig z$ implies
\[
\|\hat{z}\|^2 \,\le\, \|y\|^2 \big/ \sig_{n-1}^2.
\]
Since
$
V^T\tv = V^T(v+\dt v) = e_n + z = [ z_1 \,\, \cdots \,\, z_{n-1} \,\, 1 + z_n ]^T,
$
we have
\[
1 = \|\tv\|^2 = \|V^T\tv\|^2 = \|\hat{z}\|^2 + (1+z_n)^2,
\]
\[
1+z_n = (V^T\tv)^Te_n = \tv^TVe_n = \tv^Tv \ge 0.
\]
Similar to \reff{z_n<=||zhat||^2}, one can show $|z_n| \le \|\hat{z}\|^2$ and
\[
\|\dt v\|^2 = \|V^T \dt v\|^2 =
\|\hat{z}\|^2 + z_n^2 
\le \frac{\|y\|^2}{\sig_{n-1}^2} \Big( 1 + \frac{\|y\|^2}{\sig_{n-1}^2} \Big).
\]
Recall that $\|y\| \le 3\|\dt B\|$.
If $\|\dt B\| \le \frac{\sig_{n-1}}{3}$, then
$
\|\dt v\| \le \frac{3\sqrt{2}}{\sig_{n-1}} \|\dt B\|.
$
Hence, \reff{dv<=C0dB} holds for 
$C_0 =  \frac{3\sqrt{2}}{\sig_{n-1}}$ 
if $\|\dt B\| \le \frac{\sig_{n-1}}{3}$.
\end{proof}


\begin{thebibliography}{100}

\bibitem{ashraphijuo2017}
{\sc M. Ashraphijuo, V. Aggarwal, and X. Wang}, 
{\em A characterization of sampling patterns for low-Tucker-rank tensor completion problem}, 
2017 IEEE International Symposium on Information Theory, Aachen, 2017, pp. 531--535.

\bibitem{bai2016}
{\sc M. Bai, X. Zhang, G. Ni, and C. Cui}, 
{\em An adaptive correction approach for tensor completion}, 
SIAM J. Imaging Sci., 
9(3), pp. 1298--1323, 2016.

\bibitem{barak2022}
{\sc B. Barak and A. Moitra}, 
{\em Noisy tensor completion via the sum-of-squares hierarchy}, 
Math. Program., 
193, pp. 513--548, 2022.

\bibitem{cifuentes25}
{\sc D.~Cifuentes and Z.~Li},
{\em Solving exact and noisy rank-one tensor completion with semidefinite programming},
arXiv preprint arXiv:2511.06062, 2025.

\bibitem{cosse2021}
{\sc A.~Cosse and L.~Demanet},
{\em Stable rank-one matrix completion is solved by the level 2 Lasserre relaxation},
Found. Comput. Math., 
21, 891--940 (2021). 

\bibitem{LMV2000}
{\sc L.~De~Lathauwer, B.~De~Moor, and J.~Vandewalle},
{\em A multilinear singular value decomposition},
SIAM J. Matrix Anal. Appl., 
21 (2000), pp. 1253--1278.

\bibitem{demmel97}
{\sc J.~W.~Demmel},
{\em Applied numerical linear algebra},
SIAM, 1997.

\bibitem{dong2022}
{\sc S. Dong, B. Gao, Y. Guan, and F. Glineur}, 
{\em New Riemannian preconditioned algorithms for tensor completion via polyadic decomposition}, 
SIAM J. Matrix Anal. Appl., 
43, pp. 840--866, 2022.

\bibitem{Frolov2015}
{\sc E. Frolov and I. Oseledet}, 
{\em Tensor methods and recommender systems}, 
WIRES. DATA. MIN. KNOWL., 
7, p. e1201, 2015.

\bibitem{harshman1970}
{\sc  R. A. Harshman},
{\em Foundations of the PARAFAC procedure: Models and conditions for an “explanatory” multi-modal factor analysis}, 
UCLA working papers in phonetics, 16, pp. 1--84, 1970.

\bibitem{Hillar2013}
{\sc C. Hillar and L.-H. Lim}, 
{\em Most tensor problems are NP-hard}, 
J. ACM., 
60, pp. 1--39, 2013.

\bibitem{hitchcock1927}
{\sc F. L. Hitchcock}, 
{\em The expression of a tensor or a polyadic as a sum of products}, 
J. Math. Phys., 
6, pp. 164--189, 1927.

\bibitem{hitchcock1928}
{\sc F.~L. Hitchcock},  
{\em Multiple invariants and generalized rank of a $p$-way matrix or tensor}, 
J. Math. Phys., 
7 (1928), pp.~39--79.

\bibitem{jiang2019}
{\sc Q. Jiang and M. Ng}, 
{\em Robust low-tubal-rank tensor completion via convex optimization},
IJCAI, pp. 2649--2655, 2019.

\bibitem{kahle17}
{\sc T. Kahle, K. Kubjas, M. Kummer, and Z. Rosen}, 
{\em The geometry of rank-one tensor completion}, 
SIAM J. Appl. Algebra. Geom., 
1 (2017), pp. 200--221.

\bibitem{Karatzoglou2010}
{\sc A. Karatzoglou, X. Amatriain, L. Baltrunas, and N. Oliver}, 
{\em Multiverse recommendation: n-dimensional tensor factorization for context-aware collaborative filtering}, 
Proceedings of the 4th ACM Conference on Recommender Systems, 
New York, NY, 2010, pp. 79--86.

\bibitem{kilmer2011}
{\sc M. E. Kilmer and C. D. Martin}, 
{\em Factorization strategies for third-order tensors}, 
Linear Algebra and its Applications, 
435 (2011), pp. 641--658.

\bibitem{kiraly12}
{\sc F. Kir\'{a}ly, L.~Theran, and R. Tomioka},
{\em The algebraic combinatorial approach for low-rank matrix completion},
J. Mach. Learn. Res., 
16(1):1391--1436, 2015.

\bibitem{KolBad09}
{\sc T.~Kolda and B.~W.~Bader},
{\em Tensor decompositions and applications},
SIAM Rev., 
51 (2009), pp.~455--500.

\bibitem{kressner2014}
{\sc D. Kressner, M. Steinlechner, and B. Vandereycken}, 
{\em Low-rank tensor completion by Riemannian optimization}, 
BIT Numer. Math., 
54 (2014), pp. 447--468.

\bibitem{Lim13}
{\sc L.-H.~Lim},
{\em Tensors and hypermatrices, in: L. Hogben (Ed.)},
Handbook of linear algebra, 2nd Ed.,
CRC Press, Boca Raton, FL, 2013.

\bibitem{Lim2010}
{\sc L.-H. Lim and P. Comon}, 
{\em Multiarray signal processing: tensor decomposition meets compressed sensing}, 
Comptes Rendus Mecanique, 338(6), 311--320.

\bibitem{Lim2013}
{\sc L.-H. Lim and P. Comon}, 
{\em Blind multilinear identification}, 
IEEE. Trans. Inf. Theory, 60
(2013), pp. 1260--1280.

\bibitem{mu2014}
{\sc C. Mu, B. Huang, J. Wright, and D. Goldfarb}, 
{\em Square deal: lower bounds and improved relaxations for tensor recovery}, 
Proceedings of the International Conference on Machine Learning (PMLR), 
32 (2014), pp. 73--81.

\bibitem{nieGP}
{\sc J.~Nie}, 
{\em Generating polynomials and symmetric tensor decompositions},
Found. Comput. Math., 
17, 423--465 (2017). 

\bibitem{nieSymApprox}
{\sc J.~Nie}, 
{\em Low rank symmetric tensor approximations},
SIAM J. Matrix Anal. Appl., 
38(4), 1517--1540, 2017.

\bibitem{nieNuclear}
{\sc J.~Nie}, 
{\em Symmetric tensor nuclear norms}, 
SIAM J. Appl. Algebra. Geom., 
1(1), 599--625, 2017.

\bibitem{nie25}
{\sc J. Nie, X. Tang, and J. Zhou},
{\em Robust completion for rank-1 tensors with noises},
arXiv preprint arXiv:2504.00398, 2025.

\bibitem{qin2022}
{\sc W.~Qin, H.~Wang, F.~Zhang, J.~Wang, X.~Luo, and T.~Huang}, 
{\em Low-rank high-order tensor completion with applications in visual data}, 
IEEE. Trans. Image. Process.,
31 (2022), pp.~2433--2448.

\bibitem{qiu2021}
{\sc D.~Qiu, M.~Bai, M.~K. Ng, and X.~Zhang}, 
{\em Robust low transformed multi-rank tensor methods for image alignment},
J. Sci. Comput., 
87 (2021), Article No.~24.

\bibitem{Singh20}
{\sc M.~Singh, A.~Shapiro, and R.~Zhang},
{\em Rank one tensor completion problem},
arXiv preprint arXiv:2009.10533, 2020.

\bibitem{steinlechner2016}
{\sc M. Steinlechner}, 
{\em Riemannian optimization for high-dimensional tensor completion}, 
SIAM J. Sci. Comput., 
38 (2016), pp. S461--S484.

\bibitem{swijsen2022}
{\sc L. Swijsen, J. van der Veken, N. Vannieuwenhoven}, 
{\em Tensor completion using geodesics on Segre manifolds}, 
Numer. Linear Algebra Appl., 
29(2022), e2446.

\bibitem{tang2015}
{\sc G. Tang and P. Shah}, 
{\em Guaranteed tensor decomposition: A moment approach},
International Conference on Machine Learning, Lille, FR, 37 (2015), pp. 1491--1500.

\bibitem{tian2024}
{\sc F. Tian, M. Pasha, M. E. Kilmer, E. Miller, and A. Patra}, 
{\em Tensor completion with BMD factor nuclear norm minimization}, 
arXiv preprint arXiv:2402.13068, 2024.

\bibitem{wang19}
{\sc A. Wang, Z. Lai, and Z. Jin}, 
{\em Noisy low-tubal-rank tensor completion}, 
Neurocomputing, 330 (2019), pp. 267--279.

\bibitem{yuan2016}
{\sc M. Yuan and C.-H. Zhang}, 
{\em On tensor completion via nuclear norm minimization}, 
Found. Comput. Math., 
16 (2016), pp. 1031--1068.

\bibitem{zhang2017}
{\sc Z. Zhang and S. Aeron}, 
{\em Exact tensor completion using t-SVD}, 
IEEE Transactions on Signal Processing, 65 (2017), pp. 1511--1526.

\bibitem{zhao2020}
{\sc X. Zhao, M. Bai, and M. Ng}, 
{\em Nonconvex optimization for robust tensor completion from grossly sparse observations}, 
J. Sci. Comput., 
85 (2020), 46.

\bibitem{zhou25}
{\sc J. Zhou, J. Nie, Z. Peng, and G. Zhou},
{\em The rank-1 completion problem for cubic tensors},
SIAM J. Matrix Anal. Appl., 46(1):151--171, 2025.

\end{thebibliography}
\end{document}